\providecommand{\U}[1]{\protect\rule{.1in}{.1in}}
\newtheorem{theorem}{Theorem}
\newtheorem{proposition}[theorem]{Proposition}
\newenvironment{proof}[1][Proof]{\noindent\textbf{#1.} }{\ \rule{0.5em}{0.5em}}
\begin{document}

\title{\textbf{Queuing Networks with Varying Topology -- A Mean-Field Approach.}}
\author{Fran\c{c}ois Baccelli$^{\ddag}$, Alexandre Rybko$^{\dag\dag}$ and Senya
Shlosman$^{\dag,\dag\dag}$\\$^{\ddag}$Dept. of Math., UT Austin, Austin, USA\\$^{\dag}$Aix Marseille Universit\'{e}, CNRS, CPT, UMR 7332, \\13288 Marseille, France,\\$^{\dag}$Universit\'{e} de Toulon, CNRS, CPT, UMR 7332, \\83957 La Garde, France, \\$^{\dag\dag}$Inst. of the Information Transmission Problems,\\RAS, Moscow, Russia }
\maketitle

\begin{abstract}
We consider the queuing networks, which are made from servers, exchanging
their positions. The customers, using the network, try to reach their
destinations, which is complicated by the movements of the servers, taking
their customers with them, while they wait for the service. We develop the
general theory of such networks, and we establish the convergence of the
symmetrized version of the network to the Non-Linear Markov Process.

\end{abstract}

\section{Introduction}

In this paper we consider a model of queuing network containing servers moving
on the set of nodes of some graph $G$, in such a way that at any time, a node
harbors a single server. Customers enter the network at each entrance node.
When it arrives to some node, a customer joins the queue currently harbored by
this node. Customer $c$ also has a designated exit node $D\left(  c\right)  $,
which he needs to reach in order to exit the network. In order to reach its
destination, a customer visits a series of intermediate servers: when at
server $v$, a customer waits in the associated queue before being served. The
waiting time depends on the service discipline at $v$. Once a customer $c$
being served by $v$ leaves, he is sent to the server $v^{\prime},$ located at
the adjacent node, which is the closest to the destination node $D\left(
c\right)  $. Once it gets to $D\left(  c\right)  $, the customer leaves the system.

The main feature of the network we are considering is that servers are moving
on the graph. So while customers are waiting to be served, two adjacent
servers can move by simultaneously swapping their locations. When two servers
swap, each one carries its queue with it. So if the server $v$, currently
containing $c,$ moves, then the distance between $c$ and its destination
$D\left(  c\right)  $ might change, in spite of the fact that customer $c$ has
not yet completed its service at $v$.

In such networks with moving nodes new effects take place, which are not
encountered in the usual situations with stationary nodes. For example, it can
happen that `very nice' networks -- i.e. networks with fast servers and low
load -- become unstable once the nodes start to move. Here the instability
means that the queues becomes longer and longer with time, for a large
network. In contrast, for the same parameters, the queues remain finite in the
network with stationary servers.

This instability, which appear as a result of the movement of the servers,
will be a subject of our forthcoming papers, \cite{BRS}. In the present paper
we develop the general qualitative theory of such networks, and we focus on
the mean-field approach to them. The main result can be described as follows.
We start with the definition of the class of networks with jumping nodes. The
network can be finite or infinite. In order to be able to treat the problem,
we consider the mean-field version of it, which consists of $N$ copies of the
network, interconnected in a mean-field manner. We show that as $N$ increases,
the limiting object becomes a Non-Linear Markov Process (NLMP) in time. The
ergodic properties of this process determine the stability or instability of
our network. They will be investigated in our forthcoming papers, \cite{BRS}.
Here we will establish the existence of the NLMP and the convergence to it.

We now recall what is meant by \textbf{Non-Linear Markov Processes}. We do
this for the simplest case of discrete time Markov chains, taking values in a
finite set $S,$ $\left\vert S\right\vert =k.$ For the general case see
\cite{RS}, Sect. 2 and 3. In the discrete case the set of states of our Markov
chain is the simplex $\Delta_{k}$ of all probability measures on $S,$
$\Delta_{k}=\left\{  \mu=\left(  p_{1},...,p_{k}\right)  :p_{i}\geq
0,p_{1}+...+p_{k}=1\right\}  ,$ while the Markov evolution defines a map
$P:\Delta_{k}\rightarrow\Delta_{k}.$ In the case of usual Markov chains the
map $P$ is linear with $P$ coinciding with the matrix of transition
probabilities. A non-linear Markov chain is defined by a \textit{family} of
transition probability matrices $P_{\mu},$ $\mu\in\Delta_{k},$ so that the
matrix entry $P_{\mu}\left(  i,j\right)  $ is the probability of going from
$i$ to $j$ in one step, starting \textit{in the state}\textbf{\ }$\mu.$ The
\textit{(non-linear)} map $P$ is then defined by $P\left(  \mu\right)  =\mu
P_{\mu}.$

The ergodic properties of linear Markov chains are settled by the
Perron-Frobenius theorem. In particular, if the linear map $P$ is such that
the image $P\left(  \Delta_{k}\right)  $ belongs to the interior $Int\,\left(
\Delta_{k}\right)  $ of $\Delta_{k},$ then there is precisely one point
$\mu\in Int\,\left(  \Delta_{k}\right)  ,$ such that $P\left(  \mu\right)
=\mu,$ and for every $\nu\in\,\Delta_{k}$ we have the convergence
$P^{n}\left(  \nu\right)  \rightarrow\mu$ as $n\rightarrow\infty.$ In case $P$
is non-linear, we are dealing with a more or less arbitrary dynamical system
on $\Delta_{k}$, and the question about the stationary states of the chain or
about measures on $\Delta_{k}$ invariant under $P$ cannot be settled in general.

\section{Formulation of the Main Result}

\subsection{\label{descr} Network Description and the Main Result in a
Preliminary Form}

We consider a queueing network with jumping nodes on a connected graph
\[
G=\left[  V\left(  G\right)  ,E\left(  G\right)  \right]  .
\]
This means that at every node $v\in V$, at any time, there is one server with
a queue $q_{v}$ of customers waiting there for service. Every customer $c$ at
$v$ carries its destination,\textbf{ }$D\left(  c\right)  \in V\left(
G\right)  .$ The goal of the customer is to reach its destination. In order to
get there, a customer completing its service at $v$ jumps along the edges of
$G$ to one of the nodes of $G$ which is closest (in the graph distance) to
$D\left(  c\right)  $ where he joins the queue of the server currently
harbored by this node. Once the destination of a customer is reached, he
leaves the network. In the meantime, the servers of our network can jump. More
precisely, the two servers at $v,v^{\prime},$ which are neighbors in $G$ can
exchange their positions with rate $\beta_{vv^{\prime}}$. The queues $q_{v}$
and $q_{v^{\prime}}$ then exchange their positions as well. Of course, such an
exchange may bring some customers of $q_{v}$ and $q_{v^{\prime}}$ closer to
their destinations, and some other customers further away from their
destinations. We suppose that the rates $\beta_{v^{\prime}v}$ of such jumps
are uniformly bounded by a constant:
\begin{equation}
\left\vert \beta_{v^{\prime}v}\right\vert <\mathbf{\beta}. \label{30}%
\end{equation}
Our goal is to study the behavior of such a network.

The graphs $G$ we are interested in can be finite or infinite. The case of
finite graphs is easier, while the infinite case requires certain extra
technical points. In particular, when we talk about the functions of the
states of our network in the infinite graph case, we will always assume that
they are either local or quasi-local, with dependence on far-away nodes
decaying exponentially fast with the distance. This exponential decay property
will be conserved by our dynamics.

We will suppose that the degrees of the vertices of $G$ are finite and
uniformly bounded by some constant $D\left(  G\right)  $. Of course, this
automatically holds in the finite graph case.

In order to make our network tractable, we will study its \textit{symmetrized}%
, or mean-field, modification. This means that we pass from the graph $G$ to
its mean-field version, the graph $G_{N}=G\times\left\{  1,...,N\right\}  ,$
and eventually take the limit $N\rightarrow\infty.$ By definition, the graph
$G_{N}$ has the set of vertices $V\left(  G_{N}\right)  =V\left(  G\right)
\times\left\{  1,...,N\right\}  ;$ two vertices $\left(  v,k\right)  ,\left(
v^{\prime},k^{\prime}\right)  \in V\left(  G_{N}\right)  $ define an edge in
$E\left(  G_{N}\right)  $ iff $\left(  v,v^{\prime}\right)  \in E\left(
G\right)  .$ As we shall see, the restriction of our state process from
$G\times\left\{  1,...,N\right\}  $ to the subgraph $G\equiv G\times\left\{
1\right\}  $ goes, as $N\rightarrow\infty,$ to a Non-Linear Markov Process on
$G,$ which is a central object of our study. The limiting network
$\mathcal{K}$ (which coincides, in a sense, with the above mentioned NLMP)
will be the limit of the networks $\mathcal{K}_{N}$ on $G\times\left\{
1,...,N\right\}  $.

In our model the servers can exchange their positions (bringing all the
customers queuing at them to the new locations). The above rate of exchange
$\beta_{vv^{\prime}}$ should be renormalized, as we pass from $G$ to $G_{N},$
in order for the limit to exist. So for a server located at $\left(
v,k\right)  \in V\left(  G\times\left\{  1,...,N\right\}  \right)  ,$ the rate
of transposition with the server $\left(  v^{\prime},k^{\prime}\right)  $,
where the node $v^{\prime}$ is a neighbor of the node $v,$ is given by
\[
\frac{\beta_{vv^{\prime}}}{N}.
\]
This implies that the server at $\left(  v,k\right)  $ will jump to the set of
nodes $v^{\prime}\times\left\{  1,...,N\right\}  $ with the rate
$\beta_{vv^{\prime}},$ independent of $N.$

Every server has a queue of customers. Each customer has a class,
$\varkappa\in K.$ These classes are letters of some finite alphabet $K$. If a
customer of class $\varkappa$ completes his service on the server at $v$ and
goes to server $u$, it then gets a new class $\varkappa^{\prime}%
=\mathcal{T}\left(  \varkappa;v,u\right)  .$ Once a server finishes serving a
customer, it chooses another one from the queue, according to the class of the
customers present in the queue and the service discipline. It can happen that
the service of a customer is interrupted if a customer with higher priority
comes, and then the interrupted service is resumed after the appropriate time.

The service time distribution $\eta$ depends on the class of the customer and
on the server $v$, $\eta=\eta\left(  \varkappa,v\right)  $. We do not suppose
that $\eta$ is exponential.

Every customer $c$ in our network $\mathcal{K}_{N}$ has its destination node,
$D\left(  c\right)  =w\in V\left(  G\right)  .$ In spite of the fact that our
servers do change their positions, this location $D\left(  c\right)  $ does
not change with time. The customer $c$ tries to get to its destination node.
In order to do so, if it is located at $\left(  v,k\right)  $ and finishes its
service there, then it goes to the server at $\left(  v^{\prime},n\right)  $,
where $v^{\prime}\in G$ is the neighbor of $v$ which is the closest to
$D\left(  c\right)  $. If there are several such $v^{\prime},$ one is chosen
uniformly. The coordinate $n\in\left\{  1,...,N\right\}  $ is chosen uniformly
as well. If at the moment of the end of the service it so happens that $v$ is
at distance $1$ from $D\left(  c\right)  $ or that $v$ coincides with
$D\left(  c\right)  ,$ then the customer leaves the network. However, if the
customer $c$ is waiting for service at the server $\left(  v,k\right)  ,$ then
nothing happens with him even if $v=D\left(  c\right)  ;$ it can be that at a
later moment this server will drift away from the node $D\left(  c\right)  ,$
and the distance between $c$ and its destination $D\left(  c\right)  =w$ will
increase during this waiting time.

A formal definition of the Markov process describing the evolution of the
network $\mathcal{K}_{N}$ will be provided later, in Section \ref{finite}. Our
main result is the proof of the convergence of the network $\mathcal{K}_{N}$
to the Non-Linear Markov Process -- which is the limiting mean-field system.
The formulation of our main theorem is given in Section \ref{main stat}.

The rest of the paper is organized as follows. In Subsection \ref{comb} we
present the description of the state space of our NLMP. In Subsection
\ref{main stat} we then describe its possible jumps, write down its evolution
equation and finally formulate our main result about the existence of the NLMP
and the convergence of the networks $\mathcal{K}_{N}$ to it, as $N\rightarrow
\infty.$ In Section 3 we prove the existence theorem for the NLMP. Next
Section 4 is devoted to various compactification arguments. We use these
arguments in Section 5 to check the applicability of the Trotter-Kurtz
theorem, thus proving the convergence part of our main result. 

\subsection{\label{comb} The State Space of the Mean-Field Limit}

We describe below the state space of the mean-field limit, which will be
referred to as the \emph{Comb}.

At any given time, at each node $v\in G$, we have a finite \textit{ordered}
queue $q_{v}$ of customers, $q_{v}=\left\{  c_{i}\right\}  \equiv\left\{
c_{i}^{v}\right\}  \equiv\left\{  c_{1}^{v},...,c_{l\left(  q_{v}\right)
}^{v}\right\}  ,$ where $l\left(  q_{v}\right)  $ is the length of the queue
$q_{v}$. The customers are ordered according to their arrival times to $v.$
The information each customer carries consists of

\begin{enumerate}
\item Its class $\varkappa_{i}\equiv\varkappa_{i}\left(  c_{i}\right)  \in K,$
($\left\vert K\right\vert <\infty$) and

\item The final address $v_{i}=D\left(  c_{i}\right)  \in V(G)$ which the
customer wants to reach. The class $\varkappa$ of the customer can change as a
result of the service process. We will denote by $\bar{\varkappa}\left(
c\right)  $ the class of the customer $c$ once its service at the current
server is over. In what follows we consider only conservative disciplines.
This means that the server cannot be idle if the queue is not empty.

\item We will denote by $C\left(  q_{v}\right)  $ the customer of queue
$q_{v}$ which is being served, and we denote by $\tau\left(  C\left(
q_{v}\right)  \right)  $ the amount of time this customer already spent being
served. We need to keep track of it since our service times are not
exponential in general. It can happen that in the queue $q_{v}$ there are
customers of lower priority than $C\left(  q_{v}\right)  ,$ which already
received some service, but whose service is postponed due to the arrival of
higher priority customers.

\item Let $i^{\ast}\left(  q_{v}\right)  $ be the location of the customer
$C\left(  q_{v}\right)  $ in the queue $q_{v},$ i.e. $C\left(  q_{v}\right)
\equiv c_{i^{\ast}\left(  q_{v}\right)  }^{v}.$ The service discipline is the
rule $R_{v}$ to choose the location $i^{\ast}\left(  q_{v}\right)  $ of the
customer which has to be served. In what follows we will suppose that the rule
$R_{v}$ is some function of the sequence of classes of our customers,
$\varkappa_{1},...,\varkappa_{l\left(  q_{v}\right)  }$ and of the sequence of
their destinations, $D\left(  c_{1}^{v}\right)  ,...,D\left(  c_{l\left(
q_{v}\right)  }^{v}\right)  ,$ so that
\[
i^{\ast}\left(  q_{v}\right)  =R_{v}\left[  \left\{  \varkappa_{1}%
,...,\varkappa_{l\left(  q_{v}\right)  }\right\}  ,\left\{  D\left(  c_{1}%
^{v}\right)  ,...,D\left(  c_{l\left(  q_{v}\right)  }^{v}\right)  \right\}
\right]  .
\]
We assume that the function $R_{v}$ depends on $\left\{  D\left(  c_{1}%
^{v}\right)  ,...,D\left(  c_{l\left(  q_{v}\right)  }^{v}\right)  \right\}  $
only through the relative distances $dist\left(  D(c_{i}^{v}),v\right)  $.

\item The amount of service already acquired by the customers $c_{1}%
^{v},...,c_{l\left(  q_{v}\right)  }^{v}$ will be denoted by $\tau
_{1},...,\tau_{l\left(  q_{v}\right)  }.$ At the given time, the only $\tau$
variable which is growing is $\tau_{i^{\ast}\left(  q_{v}\right)  }\equiv
\tau\left[  C\left(  q_{v}\right)  \right]  \equiv\tau\left[  c_{i^{\ast
}\left(  q_{v}\right)  }^{v}\right]  $. Sometime we will write $c^{v}\equiv
c^{v}\left(  \varkappa,\tau,v^{\prime}\right)  $ for a customer at $v$ of
class $\varkappa,$ who has already received $\tau$ units of service and whose
destination is $v^{\prime}.$
\end{enumerate}

\noindent The space of possible queue states at $v$ is denoted by $M\left(
v\right)  $. The `coordinates' in $M\left(  v\right)  $ are listed in the five
items above. So $M\left(  v\right)  $ is a countable union of
finite-dimensional positive orthants; it will be referred to as the Comb.

Let $M=\prod_{v\in G}M\left(  v\right)  .$ The measure $\mu$ of the NLMP is
defined on this product space $M,$ which is hence a product of Combs. It turns
out that we will encounter only the product measures on $M$. We will discuss
this point below; see also \cite{PRS}, where we prove a simple extension of
the de Finetti's theorem.

\subsection{Possible State Jumps of the Mean-Field Limit and their Rates}

We give here a brief summary of the jumps of the NLMP on the Comb and their rates.

\subsubsection{The Arrival of a New (external) Customer}

A customer $c^{vv^{\prime}}$ of class $\varkappa\in K$ arrives at node $v,$
with destination $D\left(  v\right)  =v^{\prime}$ with rate $\lambda
=\lambda\left(  \varkappa,v,v^{\prime}\right)  $. We suppose that
\[
\sum_{\varkappa,v^{\prime}}\lambda\left(  \varkappa,v,v^{\prime}\right)  <C,
\]
uniformly in $v.$ As far as the notation is concerned, we will say that the
queue state $q=\left\{  q_{u},u\in G\right\}  $ changes to $q^{\prime
}=\left\{  q_{u},u\in G\right\}  \oplus c^{vv^{\prime}}.$ The associated jump
rate $\sigma_{e}\left(  q,q^{\prime}\right)  $ is
\begin{equation}
\sigma_{e}\left(  q,q^{\prime}\right)  =\lambda\left(  \varkappa,v,v^{\prime
}\right)  . \label{42}%
\end{equation}

\subsubsection{\label{serv} Service Completion}

It is easy to see that the customer in service at node $v,$ who received the
amount $\tau$ of service time, finishes his service at $v$ with the rate
$\frac{\mathcal{F}_{\varkappa\left(  C\left(  q_{v}\right)  \right)
,v}^{\prime}\left(  \tau\right)  }{1-\mathcal{F}_{\varkappa\left(  C\left(
q_{v}\right)  \right)  ,v}\left(  \tau\right)  },$ where $F_{\varkappa,v}$
denotes the distribution function of the service time. For future use we
suppose that this rate has a limit as $\tau\rightarrow\infty.$ We also suppose
that it is uniformly bounded by a constant $F<\infty.$ The queue state
$q=\left\{  q_{u},u\in G\right\}  $ changes to $q^{\prime}=
\left\{  q_{u},u\in G\right\}  \ominus C\left(  q_{v}\right)  ,$ so we denote
this rate\footnote{Queuing theorists might be surprised by these departures
without arrivals whereas customers do not necessarily leave the network. As we
shall see, in the mean-field limit, any single departure from $v$ to
$v^{\prime}$ has no effect on the state of the queues of $v^{\prime}$ because
of the uniform routing to the mean-field copies. However, the sum of the
departure processes from all copies of the servers at $v$ leads to a positive
arrival rate from $v$ to $v^{\prime}$ which is evaluated in subsection
\ref{ssarc} below.} by
\begin{equation}
\sigma_{f}\left(  q,q^{\prime}\right)  =\frac{\mathcal{F}_{\varkappa\left(
C\left(  q_{v}\right)  \right)  ,v}^{\prime}\left(  \tau\right)
}{1-\mathcal{F}_{\varkappa\left(  C\left(  q_{v}\right)  \right)  ,v}\left(
\tau\right)  }\leq\mathcal{F}. \label{41}%
\end{equation}

\subsubsection{Servers Jumping}

Let the server at $v$ jump and exchange with the one at $v^{\prime}.$ As a
result, the queue $q_{v}$ is replaced by a (random) queue $Q$ distributed
according to the distribution law $\mu_{v^{\prime}}\left(  dQ\right)  .$ So
the state changes from $q=\left\{  q_{u},u\in G\right\}  $ to $q^{\prime
}=\left\{  Q,q_{u},u\neq v\in G\right\}  $ with rate
\begin{equation}
\sigma_{ex}\left(  q,q^{\prime}\right)  =\beta_{vv^{\prime}}\mu_{v^{\prime}%
}\left(  dQ\right)  . \label{21}%
\end{equation}

\subsubsection{The Arrival of the Transit Customers}

\label{ssarc}

Suppose we are at node $v^{\prime},$ and that a customer $c^{v}$ of class
$\varkappa$ located in the server of the neighboring node $v$ completes his
service. What are the chances that this customer joins node $v^{\prime}%
\in\mathcal{N}\left(  v\right)  $ to be served there? Here we denote by
$\mathcal{N}\left(  v\right)  $ the set of all vertices of the graph $G,$
which are neighbors of $v$. For this to happen it is necessary that
\[
\mathrm{dist}(v,D\left(  c^{v}\right)  )=\mathrm{dist}(v^{\prime},D\left(
c^{v}\right)  )+1,\text{ and }\mathrm{dist}(v^{\prime},D\left(  c^{v}\right)
)>0.
\]
If there are several such nodes in $N\left(  v\right)  $, then all of them
have the same chance. In the case $dist(v,D\left(  c^{v}\right)  )=1$, the
customer $c^{v}$ goes to the node $D\left(  c^{v}\right)  $ and leaves the
network immediately. Let $E\left(  v,D\left(  c^{v}\right)  \right)  $ be the
number of such nodes:
\[
E\left(  v,D\left(  c^{v}\right)  \right)  =\#\left\{  w\in\mathcal{N}\left(
v\right)  :\mathrm{dist}(v,D\left(  c^{v}\right)  )=\mathrm{dist}(w,D\left(
c^{v}\right)  )+1\right\}  .
\]
Thus, for every pair $v,D\in G$ of sites with $\mathrm{dist}(v,D)>1$, we
define the function $e_{v,D}$ on the sites $w\in G:$%
\begin{equation}
e_{v,D}\left(  w\right)  =\left\{
\begin{array}
[c]{cc}%
\frac{1}{E\left(  v,D\right)  } & \text{if }w\in\mathcal{N}\left(  v\right)
:\mathrm{dist}(v,D)=\mathrm{dist}(w,D)+1\\
0 & \text{otherwise}%
\end{array}
\right.  . \label{11}%
\end{equation}
Then, in the state $\mu$, the rate of the transit customers of class
$\varkappa$ arriving to node $v^{\prime}$ is given by
\begin{align}
&  \sigma_{tr}\left(  q,q\oplus c^{v^{\prime},\varkappa}\right)  \equiv
\sigma_{tr}^{\mu}\left(  q,q\oplus c^{v^{\prime},\varkappa}\right) \nonumber\\
&  =\sum_{v\in\mathcal{N}\left(  v^{\prime}\right)  }\int d\mu\left(
q_{v}\right)  e_{v,D\left(  C\left(  q_{v}\right)  \right)  }\left(
v^{\prime}\right)  \frac{\mathcal{F}_{\varkappa,v}^{\prime}\left(  \tau\left(
C\left(  q_{v}\right)  \right)  \right)  }{1-\mathcal{F}_{\varkappa,v}\left(
\tau\left(  C\left(  q_{v}\right)  \right)  \right)  }\delta\left(
\bar{\varkappa}\left(  C\left(  q_{v}\right)  \right)  ,\varkappa\right)  .
\label{0012}%
\end{align}
Here $\bar{\varkappa}$ is the class the customer $C\left(  q_{v}\right)  $
gets after his service is completed at $v$. Such a customer $c^{v^{\prime
},\varkappa}$ just arrived to $v^{\prime}$, if generated by the customer
$C\left(  q_{v}\right)  ,$ has the site $D\left(  C\left(  q_{v}\right)
\right)  $ as its destination.

Note that the two rates $\left(  \ref{21}\right)  ,$ $\left(  \ref{0012}%
\right)  $ do depend on the measure $\mu,$ which is the source of the
non-linearity of our process.

\subsection{Evolution Equations of the Non-Linear Markov Process}

We start with a more precise description of the state space of the initial network.

Denote by $v\in V\left(  G\right)  $ a vertex of $G$. The state of the server
$q$ at $v$ is

\begin{itemize}
\item $w$, the sequence of customers present in its queue, ordered according
to the times of their arrivals (we recall that customers belong to classes, so
that we need the sequence of customer classes to represent the state of the queue);

\item the vector describing the amount of service already obtained by these
customers; the dimension of this vector is the length $\Vert w\Vert$ of $w$.
\end{itemize}

\noindent The $i$-th coordinate of this vector will be denoted by $\tau_{i}$.
For example, for the FIFO discipline only the first coordinate can be
non-zero. For the LIFO discipline all coordinates are positive in general.

Thus, $q_{v}$ is a point in $M\left(  v\right)  $, which is the disjoint union
of all positive orthants $\mathbb{R}_{w}^{+}$, with $w$ ranging over the set
of all finite sequences of customer classes. For $w$ the empty sequence, the
corresponding orthant $\mathbb{R}_{\emptyset}^{+}$ is a single point.

Every orthant $\mathbb{R}_{w}^{+}$ with $|w|=n>0$ is equipped with a vector
field $r=\left\{  r\left(  x\right)  ,x\in\mathbb{R}_{n}^{+}\right\}  $. The
coordinate $r_{i}\left(  x\right)  $ of $r\left(  x\right)  $ represents the
fraction of the processor power spent on customer $c_{i}$, $i=1,\ldots,n$.
This fraction is a function of the current state $x$ of the queue. We have
$\sum_{i}r_{i}\left(  x\right)  \equiv1.$ In what follows we will consider
only disciplines where exactly one coordinate of the vector $r$ is $1,$ while
the rest of them are $0.$ The vectors $r\left(  x\right)  $ are defined by the
service discipline. For example, for FIFO $r_{1}\left(  x\right)  =1,$
$r_{i}\left(  x\right)  =0$ for $i>1.$ In our general notation $r_{i}\left(
x\right)  =1$ iff $i=i^{\ast}\left(  x\right)  .$

There are two natural maps between the spaces $\mathbb{R}_{w}^{+}.$ One is the
embedding
\begin{equation}
\chi:\mathbb{R}_{w}^{+}\rightarrow\mathbb{R}_{w\cup c}^{+}, \label{51}%
\end{equation}
corresponding to the arrival of the new customer $c$; it is given by
$\chi\left(  x\right)  =\left(  x,0\right)  .$ The other one is the
projection,
\begin{equation}
\psi:\mathbb{R}_{w}^{+}\rightarrow\mathbb{R}_{w\smallsetminus c_{i^{\ast
}\left(  x\right)  }}^{+}, \label{52}%
\end{equation}
corresponding to the completion of the service of the customer $c_{i^{\ast
}\left(  x\right)  },$ currently served. It is given by $\psi\left(  x\right)
=\left(  x_{1},...,x_{i^{\ast}\left(  x\right)  -1},x_{i^{\ast}\left(
x\right)  +1},...,x_{\left\vert w\right\vert }\right)  .$ For $\left\vert
w\right\vert =0$ the space $\mathbb{R}_{\varnothing}^{+}$ is a point, and the
map $\psi:\mathbb{R}_{\varnothing}^{+}\rightarrow\mathbb{R}_{\varnothing}^{+}$
is the identity.

The third natural map $\zeta_{vv^{\prime}}:M\left(  v\right)  \times M\left(
v^{\prime}\right)  \rightarrow M\left(  v\right)  \times M\left(  v^{\prime
}\right)  $ is defined for every ordered pair $v,v^{\prime}$ of neighboring
nodes. It corresponds to the jump of a customer, who has completed his service
at $v$, to $v^{\prime},$ where he is going to be served next. It is defined as
follows: if the destination $D\left(  C\left(  q_{v}\right)  \right)  $ of the
attended customer $C\left(  q_{v}\right)  $ of the queue $q_{v}$ is different
from $v^{\prime},$ then
\begin{equation}
\zeta_{vv^{\prime}}\left(  q_{v},q_{v^{\prime}}\right)  =\left(  q_{v}\ominus
C\left(  q_{v}\right)  ,q_{v^{\prime}}\oplus c\left(  C\left(  q_{v}\right)
\right)  \right)  , \label{53}%
\end{equation}
where the customer $c\left(  C\left(  q_{v}\right)  \right)  $ has the properties:

\begin{enumerate}
\item $D\left(  c\left(  C\left(  q_{v}\right)  \right)  \right)  =D\left(
C\left(  q_{v}\right)  \right)  ,$

\item $\varkappa\left(  c\left(  C\left(  q_{v}\right)  \right)  \right)
=\mathcal{T}\left(  \varkappa\left(  C\left(  q_{v}\right)  \right)
;v,v^{\prime}\right)  ,$

\item $\tau\left(  c\left(  C\left(  q_{v}\right)  \right)  \right)  =0.$
\end{enumerate}

\noindent If $D\left(  C\left(  q_{v}\right)  \right)  =v^{\prime}$ or if
$q_{v}=\varnothing,$ then we put $\zeta_{vv^{\prime}}\left(  q_{v}%
,q_{v^{\prime}}\right)  =\left(  q_{v},q_{v^{\prime}}\right)  .$

As already explained, in order to make our problem tractable, we have to pass
from the graph $G$ studied above to the mean-field graphs $G_{N},$ mentioned
earlier. They are obtained from $G$ by taking $N$ disjoint copies of $G$ and
by interconnecting them in a mean-field manner. Let $\Omega_{N}$ be the
infinitesimal operator of the corresponding continuous time Markov process. We
want to pass to the limit $N\rightarrow\infty,$ in the hope that in the limit,
the nature of the process will become simpler. The key observation is that if
this limit exists, then the arrivals to each server at every time should be a
Poisson point process (with time dependent rate function). Indeed, the flow to
every server is the sum of $N$ flows of rates $\sim\frac{1}{N}.$ Since the
probability that a customer, served at a given node revisits this node, goes
to zero as $N\rightarrow\infty,$ the arrivals to a given server in disjoint
intervals are asymptotically independent in this limit.

In order to check that the limit $N\rightarrow\infty$ exists, we will formally
write down the limiting infinitesimal generator $\Omega.$ We will then show
that it defines a (non-linear) Markov process. Finally, we will check that the
convergence $\Omega_{N}\rightarrow\Omega$ is such that the Trotter-Kurtz
theorem applies.

Let $M=\Pi_{v}M\left(  v\right)  $. We first write the evolution of the
measure $\mu$ on $M$ when $\mu$ is a product measure, $\mu=\Pi\mu_{v}$. For a
warm-up we first consider now the case when the measure $\mu$ has a density.
The general situation will be treated below, see Proposition \ref{mu}.

For $q\in M\left(  v\right)  $ denote by $e\left(  q\right)  $ the last
customer in the queue $q$, by $l\left(  q\right)  $ length of the queue. Then
the last customer $e\left(  q\right)  $ in the queue can be also denoted by
$c_{l\left(  q\right)  }$ and the quantity $\tau\left(  e\left(  q\right)
\right)  $ denotes the time this customer already was served at $v.$

We then have%

\begin{equation}
\frac{d}{dt}\mu_{v}\left(  q_{v},t\right)  =\mathcal{A}+\mathcal{B}%
+\mathcal{C}+\mathcal{D}+\mathcal{E} \label{001}%
\end{equation}
with
\begin{equation}
\mathcal{A}=-\frac{d}{dr_{i^{\ast}\left(  q_{v}\right)  }\left(  q_{v}\right)
}\mu_{v}\left(  q_{v},t\right)  \label{06}%
\end{equation}
the derivative along the direction $r\left(  q_{v}\right)  $;
\begin{equation}
\mathcal{B}=\delta\left(  0,\tau\left(  e\left(  q_{v}\right)  \right)
\right)  \mu_{v}\left(  q_{v}\ominus e\left(  q_{v}\right)  ,t\right)  \left[
\sigma_{tr}\left(  q_{v}\ominus e\left(  q_{v}\right)  ,q_{v}\right)
+\sigma_{e}\left(  q_{v}\ominus e\left(  q_{v}\right)  ,q_{v}\right)  \right]
\label{01}%
\end{equation}
where $q_{v}$ is created from $q_{v}\ominus e\left(  q_{v}\right)  $ by the
arrival of $e\left(  q_{v}\right)  $ from $v^{\prime}$, and $\delta\left(
0,\tau\left(  e\left(  q_{v}\right)  \right)  \right)  $ takes into account
the fact that if the last customer $e\left(  q_{v}\right)  $ has already
received some amount of service, then he cannot arrive from the outside (see
$\left(  \ref{0012}\right)  $ and $\left(  \ref{42}\right)  $);
\begin{equation}
\mathcal{C}=-\mu_{v}\left(  q_{v},t\right)  \sum_{q_{v}^{\prime}}\left[
\sigma_{tr}\left(  q_{v},q_{v}^{\prime}\right)  +\sigma_{e}\left(  q_{v}%
,q_{v}^{\prime}\right)  \right]  ,
\end{equation}
which corresponds to changes in queue $q_{v}$ due to customers arriving from
the outside and from other servers;
\begin{equation}
\mathcal{D}=\int_{q_{v}^{\prime}:q_{v}^{\prime}\ominus C\left(  q_{v}^{\prime
}\right)  =q_{v}}d\mu_{v}\left(  q_{v}^{\prime},t\right)  \sigma_{f}\left(
q_{v}^{\prime},q_{v}^{\prime}\ominus C\left(  q_{v}^{\prime}\right)  \right)
-\mu_{v}\left(  q_{v},t\right)  \sigma_{f}\left(  q_{v},q_{v}\ominus C\left(
q_{v}\right)  \right)  ,
\end{equation}
where the first term describes the situation where the queue state arises
$q_{v}$ after a customer was served in a queue $q_{v}^{\prime}$ (longer by one
customer), such that $q_{v}^{\prime}\ominus C\left(  q_{v}^{\prime}\right)
=q_{v},$ while the second term describes the completion of service of a
customer in $q_{v}$;
\begin{equation}
\mathcal{E}=\sum_{v^{\prime}\text{n.n.}v}\beta_{vv^{\prime}}\left[
\mu_{v^{\prime}}\left(  q_{v},t\right)  -\mu_{v}\left(  q_{v},t\right)
\right]  , \label{02}%
\end{equation}
where the $\beta$-s are the rates of exchange of the servers.

\subsection{Main Result}

\label{main stat}

Before stating the main result, we need some observations on the states of the
network. To compare the networks $\mathcal{K}_{N}$ and the limiting network
$\mathcal{K}$, it is desirable that their states are described by probability
distributions on the same space. This is in fact easily achievable, due to the
permutation symmetry of the networks $\mathcal{K}_{N}$. Indeed, if we assume
that the initial state of $\mathcal{K}_{N}$ is $\prod_{v\in G}\mathcal{S}%
_{v,N}-$invariant -- where each permutation group $\mathcal{S}_{v,N}$ permutes
the $N$ servers at the node $v$ -- then, evidently, so is the state at every
later time. After the factorization by the permutation group $\left(
\mathcal{S}_{N}\right)  ^{G}$ the configuration at any vertex $v\in G$ can be
conveniently described by an atomic probability measure $\Delta_{N}^{v}$ on
$M\left(  v\right)  $, of the form $\sum_{k=1}^{N}\frac{1}{N}\delta\left(
q_{v,k},\tau\right)  ,$ where $\tau$ is the vector of already received
services in queue $q_{v,k}$. We put $\Delta_{N}=\left\{  \Delta_{N}%
^{v}\right\}  .$

We study the limit of the networks $\mathcal{K}_{N}$ as $N\rightarrow\infty$.
For this limit to exist, we need to choose the initial states of the networks
$\mathcal{K}_{N}$ appropriately. So we suppose that the initial states
$\nu_{N}$ of our networks $\mathcal{K}_{N}$ -- which are atomic measures on
$M$ with atom weight $1/N$ -- converge to the state $\nu$ of the limiting
network $\mathcal{K}.$

We are now in a position to state the main result:

\begin{theorem}
\label{main result} Let $S_{N,t}$ be the semigroup $\exp\left\{  t\Omega
_{N}\right\}  $ defined by the generator $\Omega_{N}$ of the network
$\mathcal{K}_{N}$, described in Section \ref{descr} (the operator $\Omega_{N}$
is formally defined in $\left(  \ref{031}-\ref{034}\right)  $). Let $S_{t}$ be
the semigroup $\exp\left\{  t\Omega\right\}  ,$ defined by the generator
$\Omega$ of the network $\mathcal{K}$ (the operator $\Omega$ is defined in
$\left(  \ref{001}-\ref{02}\right)  $ for `nice' states, and in $\left(
\ref{015}-\ref{019}\right)  $ for the general case).

\textbf{1. }The semigroup $S_{t}$ is well-defined, i.e., for every measure
$\nu$ on $M$, the trajectory $S_{t}\left(  \nu\right)  $ exists and is unique.

\textbf{2. }Suppose that the initial states $\nu_{N}$ of the networks
$\mathcal{K}_{N}$ converge to the state $\nu$ of the limiting network
$\mathcal{K}.$ Then for every $t>0$ $S_{N,t}\left(  \nu_{N}\right)
\rightarrow S_{t}\left(  \nu\right)  .$
\end{theorem}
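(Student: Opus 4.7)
My approach would follow the three-part structure already signaled in the introduction: a fixed-point argument for Part~1, a compactification setup, and then an application of the Trotter--Kurtz theorem for Part~2.

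For Part~1 (existence and uniqueness of $S_t$), I would recast the evolution equation (\ref{001})--(\ref{02}) in mild (Duhamel) form. The non-linearity enters only through the rates $\sigma_{ex}$ and $\sigma_{tr}^\mu$, and these rates are affine in $\mu$. Given a continuous trajectory $t\mapsto\bar\mu_t$ in the space of product measures on $M$, I would freeze it inside the rates, which produces a time-inhomogeneous \emph{linear} Markov evolution whose total jump rate per queue is uniformly bounded thanks to (\ref{30}), (\ref{41}) and the hypothesis $\sum_{\varkappa,v'}\lambda(\varkappa,v,v')<C$. Let $\mu_t=\Phi(\bar\mu_\cdot)(t)$ denote the associated measure trajectory starting from $\nu$. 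Because the total rate at which any fixed queue can change is bounded by a constant $R=R(\beta,\mathcal{F},C,D(G))$, standard Gronwall-type estimates on a small interval $[0,\delta]$ with $R\delta$ small show that $\Phi$ is a contraction in a suitable weak metric: total variation on cylinder events, weighted exponentially in the distance from a base node, so as to preserve the quasi-local decay property in the infinite-$G$ case. Iterating in $\delta$-steps gives global existence and uniqueness.

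For Part~2 (convergence), I would invoke the Trotter--Kurtz theorem. After factoring by $(\mathcal{S}_N)^G$, the state of $\mathcal{K}_N$ becomes the empirical measure tuple $\Delta_N=\{\Delta_N^v\}$, which lives in the same space of measure tuples as the limiting state $\mu=\{\mu_v\}$. On a core $\mathcal{D}$ of cylindrical functions depending on the queues at finitely many nodes, the generator $\Omega_N$ splits into internal-service, external-arrival, server-exchange and transit terms. The first two already match the corresponding pieces of $\Omega$; for the exchange and transit terms the empirical measure $\Delta_N^{v'}$ plays the role that $\mu_{v'}$ plays in (\ref{21}) and (\ref{0012}), and a law-of-large-numbers argument, using the mean-field renormalization $\beta_{vv'}/N$ and the uniform choice of coordinate $n\in\{1,\ldots,N\}$, replaces $\Delta_N^{v'}$ by its limit $\mu_{v'}$ as $N\to\infty$. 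This yields $\Omega_N f\to\Omega f$ for $f\in\mathcal{D}$, and combined with the uniqueness delivered by Part~1, Trotter--Kurtz gives $S_{N,t}(\nu_N)\to S_t(\nu)$.

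The main technical obstacle is the compactification step that occupies Section~4. The state space $M=\prod_v M(v)$ is neither compact nor locally compact: each $M(v)$ is a countable disjoint union of orthants of unbounded dimension, indexed by finite words in the class alphabet $K$, and $G$ itself may be infinite. To apply Trotter--Kurtz one must embed $M$ into a compact Polish space on which the generators act on a well-behaved algebra of functions, and simultaneously verify tightness of the empirical measures $\Delta_N$ as well as of the processes $\{\Delta_N(t)\}_{t\le T}$ in Skorokhod space. The uniform service-rate bound $\mathcal{F}$ of (\ref{41}) (preventing the age variables $\tau_i$ from growing without control on the served customer), the degree bound $D(G)$, and the exponential decay assumption on quasi-local observables (controlling spatial tails) should together supply the required compactness. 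The non-exponential service distributions, which force us to carry the full age vector $(\tau_1,\ldots,\tau_{l(q_v)})$ as part of the state, are precisely what make this compactification genuinely delicate; with exponential services a purely combinatorial state space would suffice.
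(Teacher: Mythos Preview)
Your plan is close in spirit to the paper's, but there are two places where it diverges in a way that matters.

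\textbf{Part 1.} Your freeze-and-contract argument on measure-valued trajectories is a legitimate alternative to what the paper actually does. The paper does \emph{not} contract on $t\mapsto\mu_t$; it contracts on the family of \emph{scalar} departure rate functions $\{\lambda_{v'v}(t)\}$, via the map $\psi_{\mu(0)}:\{\lambda_{v'v}\}\mapsto\{b_{v'v}\}$ that sends input Poisson rates to output departure rates. The contraction constant is obtained by a coupling argument (``colored customers'') showing that on $[0,T]$ with $T$ small the departures are dominated by the initial queue content rather than by fresh arrivals, so the map shrinks the $L_1$ distance between rate functions by a factor $O(T\mathcal F)$. Your Duhamel/Gronwall route on measures should also work since the nonlinear rates are affine and bounded in $\mu$, but it is a different computation and you should not expect the paper's estimates to carry over verbatim.

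\textbf{Part 2.} Here there is a genuine gap. The test functions in Trotter--Kurtz are functions \emph{on the space $\mathcal P$ of measures}, not functions on $M$; after symmetrization the state of $\mathcal K_N$ is the empirical tuple $\Delta_N\in\mathcal P_N\subset\mathcal P$, and $\Omega_N F$ at $\Delta_N$ is a sum of increments of the form $F(\Delta_N\pm\tfrac1N\delta(\cdot))-F(\Delta_N)$. These do \emph{not} ``already match'' the corresponding terms of $\Omega$: for every jump type (external arrivals included) one must pass from finite differences of size $1/N$ to the Fr\'echet differential $F'_{\Delta_N}$, which requires $F\in C^1(\mathcal P)$ in a suitable norm. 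The paper's core is the space of Fr\'echet-differentiable functions on $\mathcal P$, not cylindrical functions of finitely many queues, and the heart of Section~5 is the verification that this core is \emph{invariant} under $S_t$ --- i.e.\ that $\mu\mapsto S_t\mu$ is itself Fr\'echet differentiable (Proposition~\ref{diff}, proved by linearizing the evolution equation and bounding the remainder via Gronwall). Your proposal neither identifies the need for $C^1$ test functions on measures nor supplies the invariance of the core, and the ``law-of-large-numbers'' description of the transit/exchange terms mislocates the issue: no LLN is used, only the replacement of $N$ small increments by a single derivative. Without the differentiability of $S_t\mu$ in $\mu$, condition~(\ref{013}) of Theorem~\ref{TK} is unverified and the Trotter--Kurtz machinery does not close.
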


\section{The Non-Linear Markov Process: Existence}

In this section we prove Part 1 of Theorem \ref{main result}.

Our Non-Linear Markovian evolution is a jump process on $M=\Pi_{v}M\left(
v\right)  .$ Between the jumps, the point $q\in M\left(  v\right)  $ moves
with unit speed in its orthant along the field $r\left(  q\right)  $; this
movement is deterministic. The point $q\in M\left(  v\right)  $ can also
perform various jumps, as described above.

\begin{theorem}
For every initial state $\mu\left(  0\right)  =\Pi\mu_{v}\left(  0\right)  $
Equation $\left(  \ref{001}-\ref{02}\right)  $ has a solution, which is unique.
\end{theorem}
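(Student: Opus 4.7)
The plan is to recast the evolution equation (\ref{001})--(\ref{02}) as a fixed-point problem on measure-valued trajectories. The crucial observation is that the non-linearity enters only through the rates $\sigma_{tr}^{\mu}$ in (\ref{0012}) and $\sigma_{ex}$ in (\ref{21}), and that both depend on $\mu$ \emph{linearly}. Thus for any prescribed trajectory $\tilde{\mu}_{\cdot}=\{\tilde{\mu}_{v,s}\}$ the right-hand side of (\ref{001}) becomes an ordinary \emph{linear} Kolmogorov forward equation for a time-inhomogeneous jump-plus-flow process on the Comb $M(v)$; its well-posedness follows from the standard theory for bounded-rate pure-jump processes (the rates $\mathcal{F}$, $\mathbf{\beta}$ and the arrival rate bound $C$ are all uniform) combined with the method of characteristics for the deterministic drift term $\mathcal{A}$ in (\ref{06}). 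This defines a map $\Phi:\tilde{\mu}_{\cdot}\mapsto\mu_{\cdot}$ on $C([0,T],\mathcal{P}(M))$, and the theorem amounts to producing a unique fixed point of $\Phi$.

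First I would verify that $\Phi$ preserves the product structure $\mu_{t}=\prod_{v}\mu_{v,t}$: once the rates are frozen by $\tilde{\mu}$, the evolutions at different vertices decouple (the only coupling was through $\tilde{\mu}$), so each factor $\mu_{v,t}$ obeys an autonomous linear equation. Next I would equip $C([0,T],\prod_{v}\mathcal{P}(M(v)))$ with a suitable norm: total variation on each coordinate $v$, weighted by $e^{-\alpha\,\mathrm{dist}(v,v_{0})}$ in the infinite-graph case, with $\alpha$ chosen large relative to the degree bound $D(G)$, so that the sums over neighbors appearing in $\mathcal{B}$, $\mathcal{C}$, $\mathcal{E}$ and in $\sigma_{tr}^{\mu}$ are absolutely convergent.

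The contraction estimate would then be obtained in Duhamel form. Subtracting the integral equations for $\Phi(\tilde{\mu})$ and $\Phi(\tilde{\mu}')$, the difference of the two generators acts linearly on $\tilde{\mu}-\tilde{\mu}'$ with an operator norm bounded by a constant $K$ depending only on $\mathcal{F}$, $\mathbf{\beta}$, $D(G)$ and $C$. A Gr\"onwall--Picard iteration then gives $\|\Phi^{n}(\tilde{\mu})-\Phi^{n}(\tilde{\mu}')\|_{[0,T]}\leq (KT)^{n}/n!$ times the initial distance, yielding existence and uniqueness of the fixed point on any finite horizon, for an arbitrary product initial datum $\mu(0)=\prod\mu_{v}(0)$.

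I expect two genuine difficulties. The first is giving a clean measure-theoretic sense to the linear Kolmogorov equation on the Comb $M(v)$: the state space is a countable disjoint union of finite-dimensional orthants, glued along the maps $\chi$, $\psi$ in (\ref{51})--(\ref{52}) and carrying the non-trivial drift $r(q)$, so for general (non-absolutely-continuous) measures the equation must be reformulated weakly against test functions, and the contraction argument must be run in that weaker setting. The second is the infinite-graph case, where the sums over neighbors in the generator and in $\sigma_{tr}^{\mu}$ must be controlled via quasi-locality; the uniform degree bound $D(G)$ together with the preservation of exponential decay by the dynamics, both stated in the description of the model, are what make the weighted norm close up and ensure that $\Phi$ maps the space into itself.
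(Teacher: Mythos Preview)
Your approach is sound and is the standard McKean--Vlasov scheme: freeze the trajectory $\tilde\mu_\cdot$, solve the resulting \emph{linear} forward equation on each Comb (bounded total jump rate plus a unit-speed drift, hence well-posed by characteristics plus the usual pure-jump construction), and then contract via Duhamel/Picard in a weighted total-variation norm on product measures. The paper does something genuinely different: it parameterises the problem not by the measure trajectory but by the collection of \emph{rate functions} $\{\lambda_{v'v}(t)\}$ of the internal flows, defines the map $\psi_{\mu(0)}:\lambda\mapsto b$ that sends prescribed Poisson arrival rates to the resulting departure rates, and proves that $\psi_{\mu(0)}$ is a contraction in a weighted $L_1$ metric on $[0,T]$ for $T$ small. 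The contraction is obtained probabilistically, by a colouring/coupling of the two Poisson inputs: the two departure streams differ only if a coloured customer actually influences the output, an event of probability $O(T\mathcal{F})$. Server jumps are handled by conditioning on their number and times, contributing a factor $(T\boldsymbol{\beta})^k$.

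What each buys: your route is cleaner analytically and gives global-in-time existence in one stroke via the $(KT)^n/n!$ bound, but it requires you to set up the weak formulation on the Comb carefully (the difficulty you flag) and to check that the linear semigroup is a TV-contraction so that Gr\"onwall closes. The paper's route avoids the abstract linear theory on the Comb entirely---by working with scalar rate functions it reduces everything to comparing one-dimensional flows---and the coupling argument gives more probabilistic insight (the ``worst case is empty initial queues'' observation), at the cost of a more hands-on argument that must be patched from small time intervals and supplemented by a separate step (Part~C) to rule out non-uniqueness from infinity in the infinite-graph case. Both handle the infinite graph with exponentially weighted norms indexed by distance to a root.
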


\begin{proof}
The idea of the proof is the following. Let us introduce an auxiliary system
on the same set of servers with the same initial condition $\mu\left(
0\right)  .$ Instead of the internal Poisson flows of the initial system with
rates $\bar{\lambda}_{v^{\prime}v}\left(  t\right)  $ (which are
(hypothetically) determined uniquely by $\mu\left(  0\right)  $), we consider,
for every node $v$, an arrival Poisson flow of customers with arbitrary rate
function $\lambda_{v^{\prime}v}\left(  t\right)  .$ The result of the service
at $v$ will then be a collection of (individually non-Poisson) departure flows
to certain nodes $v^{\prime\prime}.$ The flow from $v$ to $v^{\prime\prime}$
is non-Poisson in general. Consider its rate function, $b_{vv^{\prime\prime}%
}\left(  t\right)  :$
\[
b_{vv^{\prime\prime}}\left(  t\right)  =\lim_{\Delta t\rightarrow0}%
\frac{\mathbb{E}\left(  \text{number of cust. arrived from }v\text{ to
}v^{\prime\prime}\text{ in }[t,t+\Delta t]\right)  }{\Delta t}.
\]
Thus, we have an operator $\psi_{\mu\left(  0\right)  },$ which transforms the
collection $\left\{  \lambda_{v^{\prime}v}\left(  t\right)  \right\}  $ to
$\left\{  b_{v^{\prime}v}\left(  t\right)  \right\}  .$ Our theorem about the
existence and uniqueness will follow from the fact that the map $\psi
_{\mu\left(  0\right)  }$ has a unique fixed point, $\bar{\lambda}$. Note,
that the ratefunctions $\lambda$ and $b$ depend not only on the nodes
$v,v^{\prime},$ but also on the class of customers. Below we often omit some
coordinates of these vectors, but we always keep them in mind.

Note that the functions $b$ are continuous and uniformly bounded. Moreover,
without loss of generality we can suppose that they are Lipschitz, with a
Lipschitz constant $L,$ which depends only on $\gamma$ -- the supremum of the
service rates. Since we look for the fixed point, we can assume that the
functions $\lambda$ are bounded as well, and also that they are integrable and
Lipschitz, with the same Lipschitz constant. So we restrict the functions
$\lambda$ to be in this class $L_{L}\left[  0,T\right]  $. In case the graph
$G$ is finite, we put the $L_{1}$ metric on our functions,\textbf{ }%
\[
\int_{0}^{T}\left\vert \lambda^{1}\left(  \tau\right)  -\lambda^{2}\left(
\tau\right)  \right\vert d\tau\equiv\sum_{v^{\prime}v}\int_{0}^{T}\left\vert
\lambda_{v^{\prime}v}^{1}\left(  \tau\right)  -\lambda_{v^{\prime}v}%
^{2}\left(  \tau\right)  \right\vert d\tau.
\]
Note that this metric turns $L_{L}\left[  0,T\right]  $ into complete compact
metric space, by Arzel\`{a}--Ascoli. For the infinite $G$ we choose an
arbitrary vertex $v_{0}\in G$ as a `root', and we define likewise
\begin{align*}
&  \int_{0}^{T}\left\vert \lambda^{1}\left(  \tau\right)  -\lambda^{2}\left(
\tau\right)  \right\vert d\tau\\
&  \equiv\sum_{v^{\prime}v}\exp\left\{  -2D\left(  G\right)  \left[
\mathrm{dist}\left(  v_{0},v^{\prime}\right)  +\mathrm{dist}\left(
v_{0},v\right)  \right]  \right\}  \int_{0}^{T}\left\vert \lambda_{v^{\prime
}v}^{1}\left(  \tau\right)  -\lambda_{v^{\prime}v}^{2}\left(  \tau\right)
\right\vert d\tau.
\end{align*}
\noindent We recall that $D\left(  G\right)  $ is the maximal degree of the
vertex of $G.$ The topology on $L_{L}\left[  0,T\right]  $ thus defined is
equivalent to the Tikhonov topology; in particular, $L_{L}\left[  0,T\right]
$ is again a complete compact metric space.

We will now show that for every $\mu\left(  0\right)  $, the map $\psi
_{\mu\left(  0\right)  }$ is a contraction on $\mathcal{L}_{L}\left[
0,T\right]  $; by the Banach theorem this will imply the existence and the
uniqueness of the fixed point for $\psi_{\mu\left(  0\right)  }$. Without loss
of generality we can assume that $T$ is small.

Let $\left\{  \lambda_{v}^{1}\left(  t\right)  ,\lambda_{v}^{2}\left(
t\right)  :t\in\left[  0,T\right]  ,v\in G\right\}  $ be the rates of two
collections of Poisson inflows to our servers; assume that for all $v$,
\[
\int_{0}^{T}\left\vert \lambda_{v}^{1}\left(  \tau\right)  -\lambda_{v}%
^{2}\left(  \tau\right)  \right\vert d\tau<\Lambda
\]
uniformly in $v.$ We want to estimate the difference $b_{0}^{1}\left(
t\right)  -b_{0}^{2}\left(  t\right)  $ of the rates of the departure flows at
$0\in G;$ clearly, this will be sufficient. The point is that the rates
$b_{0}^{i}\left(  \cdot\right)  $ depend also on the rates $\lambda_{v}%
^{i}\left(  \cdot\right)  $ for $v\neq0$, due to the possibility of servers
jumps, after which the state at the node $0$ is replaced by that at the
neighboring node.

Let $\tau_{1}<\tau_{2}<...<\tau_{k}\in\left[  0,T\right]  ,$ $k=0,1,2,\ldots$
be the (random) moments when the state at $0$ is replaced by the state at a
neighbor node, due to the `server jumps'. We will obtain an estimate on
$\int_{0}^{T}\left\vert b_{0}^{1}\left(  t\right)  -b_{0}^{2}\left(  t\right)
\right\vert dt$ under the condition that the number $k$ and the moments
$\tau_{1}<\tau_{2}<...<\tau_{k}$ are fixed; since our estimate is uniform in
the conditioning, this is sufficient. Note that the probability to have $k$
jumps during the time $T$ is bounded from above by $\left(  T\mathbf{\beta
}\right)  ^{k}$ (see $\left(  \ref{30}\right)  $).

Informally, the contraction takes place because the departure rates $b\left(
t\right)  $ for $t\in\left[  0,T\right]  $ with $T$ small depend mainly on the
initial state $\mu\left(  0\right)  $: the new customers, arriving during the
time $\left[  0,T\right]  $ have little chance to be served before $T,$ if
there were customers already waiting. Therefore the `worst'\ case for us is
when in the initial states all the servers are empty, i.e. the measures
$\mu_{v}\left(  0\right)  $ are all equal to the measure $\delta_{0},$ having
a unit atom at the empty queues $\varnothing.$

\textbf{ A. }Let us start the proof by imposing the condition that the jumps
to the server at $v=0$ do not happen, i.e. $k=0.$ So let $\lambda^{1}\left(
t\right)  ,$ $\lambda^{2}\left(  t\right)  ,t\in\left[  0,T\right]  $ be the
rates of two collections of the Poisson inflows to the empty server, and
$\gamma$ be the supremum of the service rates. We want to estimate the
difference $\int_{0}^{T}\left\vert b^{1}\left(  t\right)  -b^{2}\left(
t\right)  \right\vert dt$ of the rates of the departure flows. For that we
will use the representation of integrals as expectations of random variables.

Consider first the integral
\[
I_{T}^{\lambda}=\sum_{v^{\prime}}\int_{0}^{T}\left\vert \lambda_{v^{\prime}%
v}^{1}\left(  t\right)  -\lambda_{v^{\prime}v}^{2}\left(  t\right)
\right\vert dt.
\]
For every value of the index $v^{\prime}$ let us consider the region between
the graphs of the functions
\[
H_{v^{\prime}}\left(  t\right)  =\max\left\{  \lambda_{v^{\prime}v}^{1}\left(
t\right)  ,\lambda_{v^{\prime}v}^{2}\left(  t\right)  \right\}  \quad
\mbox{and}\quad h_{v^{\prime}}\left(  t\right)  =\min\left\{  \lambda
_{v^{\prime}v}^{1}\left(  t\right)  ,\lambda_{v^{\prime}v}^{2}\left(
t\right)  \right\}  .
\]
Then the integral $\int_{0}^{t}\left\vert \lambda_{v^{\prime}v}^{1}\left(
t\right)  -\lambda_{v^{\prime}v}^{2}\left(  t\right)  \right\vert dt$ is the
expectation of the number of the rate $1$ Poisson points $\omega_{v^{\prime}%
}\in\mathbb{R}^{2},$ falling between $H_{v^{\prime}}$ and $h_{v^{\prime}}.$
Let us denote the corresponding random variables by $\alpha_{v^{\prime}}.$ Let
us declare the customers that correspond to the points falling below
$h_{v^{\prime}}=\min\left\{  \lambda_{v^{\prime}v}^{1}\left(  t\right)
,\lambda_{v^{\prime}v}^{2}\left(  t\right)  \right\}  $ as colorless; let us
color the points falling between $H_{v^{\prime}}$ and $h_{v^{\prime}}$ in the
following way: the customers from the first flow, falling between
$\lambda_{v^{\prime}v}^{1}\left(  t\right)  $ and $h_{v^{\prime}}\left(
t\right)  ,$ get the red color, while the customers from the second flow,
falling between $\lambda_{v^{\prime}v}^{2}\left(  t\right)  $ and
$h_{v^{\prime}}\left(  t\right)  ,$ get the blue one. The variable
$\alpha_{v^{\prime}}$ is thus the number of colored customers. This defines
the coupling between the two input flows.

Note that, on the event that each of the departure flows is not affected by
the colored customers, the two departure flows are identical. Therefore the
contribution of this event to the integral $\int_{0}^{T}\left\vert
b_{vv^{\prime\prime}}^{1}\left(  t\right)  -b_{vv^{\prime\prime}}^{2}\left(
t\right)  \right\vert dt$ is zero. Let $\mathcal{C}$ be the complement to this
event. Note that the probability of arrival of colored customer is equal to
$I_{T}^{\lambda}.$ \textit{Under the condition} of the arrival of the colored
customer $\bar{c}$, the conditional probability that this customer will affect
the departure flow is of the order of $\mathcal{F}T$ (see $\left(
\ref{41}\right)  $). Indeed, either this customer $\bar{c}$ himself will be
served and will departure $v$ during the time $T,$ or else the arrival of
$\bar{c}$ can prevent the departure of the customer with lower priority, whose
service was going to be completed during $\left[  0,T\right]  .$ Hence the
(unconditional) probability of $\mathcal{C}$ is $T\mathcal{F}I_{T}^{\lambda}$
for $T$ small, and therefore its contribution to the integral $\int_{0}%
^{T}\left\vert b_{vv^{\prime\prime}}^{1}\left(  t\right)  -b_{vv^{\prime
\prime}}^{2}\left(  t\right)  \right\vert dt$ is bounded from above by
$const\cdot T\mathcal{F}I_{T}^{\lambda},$ which is $\ll\Lambda$ uniformly.

In the case of non-empty initial queue the situation is even simpler, since we
have more uncolored customers.

\textbf{B. }Consider now the case when we also allow the jumping of servers.
Then the operator $\psi_{\mu\left(  0\right)  }\ $from the collection
$\left\{  \lambda_{e}\left(  t\right)  ,e\in E\left(  G\right)  \right\}  $ to
$\left\{  b_{e}\left(  t\right)  ,e\in E\left(  G\right)  \right\}  ,$ is
still defined, once the initial condition $\mu\left(  0\right)  =\left\{
\mu_{v}\left(  0\right)  \right\}  $ is fixed. We argue that the contribution
of the event of having $k$ server jumps at the node $v=0$ to $\int_{0}%
^{T}\left\vert b_{vv^{\prime\prime}}^{1}\left(  t\right)  -b_{vv^{\prime
\prime}}^{2}\left(  t\right)  \right\vert dt$ is of the order of $T^{k+1}.$

Consider the case $k=1,$ and let us condition on the event that $\tau_{1}%
=\tau<T$ and the jump to $v=0$ is made from the nearest neighbor node (n.n.)
$w$. So we need to compare the evolution at $v=0$ defined by the Poisson
inputs with rates $\lambda_{v^{\prime}0}^{1}\left(  t\right)  ,\lambda
_{w^{\prime}w}^{1}\left(  t\right)  $ with that defined by the rates
$\lambda_{v^{\prime}0}^{2}\left(  t\right)  ,\lambda_{w^{\prime}w}^{2}\left(
t\right)  .$ We will use the same couplings between the pairs of flows. Before
time $\tau$, the server at $v=0$ behaves as described above in Section
\textbf{A}. At the moment $\tau$ its state $\mu_{0}^{i}\left(  \tau\right)  $
is replaced by that drawn independently from $\mu_{w}^{i}\left(  \tau\right)
,$ which then evolves according to the in-flow defined by $\lambda_{v^{\prime
}0}^{i}\left(  t\right)  ,$ $i=1,2.$ A moment thought shows that all the
arguments of Part \textbf{A }still\textbf{ }apply, so under all the conditions
imposed we have that $\int_{0}^{T}\left\vert b_{0v^{\prime\prime}}^{1}\left(
t\right)  -b_{0v^{\prime\prime}}^{2}\left(  t\right)  \right\vert dt$ is again
bounded from above by $const\cdot T\mathcal{F}\max\left(  I_{T}^{\lambda
_{\ast0}},I_{T}^{\lambda_{\ast w}}\right)  $ which is $\ll\Lambda$ uniformly.

The averaging over $\tau$ brings the extra factor $T\mathbf{\beta},$ so the
corresponding contribution to $\int_{0}^{T}\left\vert b_{0v^{\prime\prime}%
}^{1}\left(  t\right)  -b_{0v^{\prime\prime}}^{2}\left(  t\right)  \right\vert
dt$ is $const\cdot T^{2}\mathbf{\beta}\mathcal{F}\max\left(  I_{T}%
^{\lambda_{\ast0}},I_{T}^{\lambda_{\ast w}}\right)  $ $\ll T\Lambda.$

The case of general values of $k$ follows immediately.

\textbf{C.} Thus far we have shown that the initial condition $\mu\left(
0\right)  =\left\{  \mu_{v}\left(  0\right)  \right\}  $ defines all the
in-flow rates $\lambda_{v^{\prime}v}\left(  t\right)  ,$ $t\in\left[
0,T\right]  $ uniquely. If the graph $G$ is finite, that implies the
uniqueness of the evolving measure $\mu\left(  t\right)  .$ For the infinite
graph it can happen in principle that different `boundary conditions' -- i.e.
different evolutions of $\mu$ `at infinity' -- might still be a source of
non-uniqueness. However, the argument of Part \textbf{B} shows that the
influence on the origin $0\in G$ from the nodes at distance $R$ during the
time $T$ is of the order of $T^{R},$ provided the degree of $G$ is bounded.
Therefore the uniqueness holds for the infinite graphs as well.
\end{proof}

\begin{proposition}
\label{Fe} The semigroup, defined by Equations $\left(  \ref{001}%
-\ref{02}\right)  $ is Feller.
\end{proposition}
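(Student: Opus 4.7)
The plan is to establish two continuity properties of the semigroup $S_t$ acting on measures on $M=\prod_{v}M(v)$: (i) for each $t\geq 0$ the map $\nu\mapsto S_t\nu$ is weakly continuous, and (ii) $S_t\nu\to\nu$ weakly as $t\to 0^+$. Both will follow by recycling the coupling machinery developed in the preceding existence proof.

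First I would exploit the fixed-point characterization from that proof: the trajectory $S_t\nu$ is uniquely determined by the fixed point $\bar\lambda=\bar\lambda(\nu)\in L_L[0,T]$ of the contraction $\psi_{\nu}$, whose contraction constant (obtained in Parts \textbf{A} and \textbf{B}) does not depend on $\nu$. The map $\nu\mapsto\psi_{\nu}$ itself is continuous in the weak topology, because $\psi_{\nu}(\lambda)$ is built by integrating against $\nu$ bounded continuous kernels involving the hazard rate $\mathcal F'/(1-\mathcal F)$ and the finitely-supported selector $e_{v,D}$ appearing in (\ref{41})--(\ref{0012}). A standard uniformity argument for Banach fixed points then gives continuity of $\nu\mapsto\bar\lambda(\nu)$. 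Equipped with this, I would reuse the coloring construction of Part \textbf{A} --- declaring Poisson points falling between the graphs of $\bar\lambda(\nu^1)$ and $\bar\lambda(\nu^2)$ as colored --- to couple the evolutions starting from $\nu^1$ and $\nu^2$ and bound the difference $S_t\nu^1-S_t\nu^2$ by a quantity tending to zero with $\|\bar\lambda(\nu^1)-\bar\lambda(\nu^2)\|+\|\nu^1-\nu^2\|$. This yields (i).

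For (ii), the uniform bounds (\ref{30}), (\ref{42}), (\ref{41}) on the arrival, service, and exchange rates guarantee that the total rate of events at any single node is uniformly bounded, so the probability that a given vertex experiences any event during $[0,t]$ is $O(t)$. In the complementary event the state at that vertex simply moves with unit speed along the deterministic field $r$, and hence the marginal $\mu_v(t)$ is within $O(t)$ of $\mu_v(0)$ in any reasonable metric on the Comb $M(v)$. Summing with the exponential weights used in the existence proof gives $S_t\nu\to\nu$ as $t\to 0$.

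The main obstacle is the infinite-graph case, where weak continuity must be verified in the exponentially weighted Tikhonov topology on $L_L[0,T]$. Here I would invoke Part \textbf{C} of the previous proof: the influence on a given origin from vertices at distance $R$ during time $T$ is $O(T^R)$, so the exponential weights dominate the tail uniformly in $\nu$ and a truncation argument reduces both (i) and (ii) to their finite-graph analogues, at which point the coupling estimate above is directly effective.
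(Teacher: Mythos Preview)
Your proposal is correct and follows essentially the same route as the paper: both arguments reduce the Feller property to (a) continuity in time of the trajectories, obtained from the boundedness and continuity of the fixed-point inflow rates $\bar\lambda$, and (b) continuity of the map $\mu_0\mapsto\mu_t$, obtained by showing that $\bar\lambda(\nu)$ depends continuously on the initial state $\nu$. The paper's proof simply asserts these continuity facts, whereas you spell out the mechanism (uniform contraction constant of $\psi_\nu$, the coloring coupling, and the Part~\textbf{C} truncation for infinite $G$), so your version is a more detailed realization of the same idea.
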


\begin{proof}
Note that the trajectories $\mu_{t}$ are continuous in time. Indeed, they are
defined uniquely by the inflow rates $\left\{  \bar{\lambda}_{v^{\prime}%
v}\left(  t\right)  \right\}  ,$ which in turn are defined by the external
flows and by the initial state $\mu_{0}.$ Since for the NLMP they coincide
with the departure flow rates, $\left\{  \bar{b}_{v^{\prime}v}\left(
t\right)  \right\}  ,$ which are continuous in time, they are continuous
themselves, and so the trajectories $\mu_{t}$ are continuous as well.
Moreover, the dependence of the rates $\bar{\lambda}_{v^{\prime}v}\left(
t\right)  $ on the initial state $\mu_{0}$ is continuous, since the departure
rates $\left\{  \bar{b}_{v^{\prime}v}\left(  t\right)  \right\}  $ are
continuous in $\mu_{0}.$ Therefore the map $\mathcal{P}\rightarrow
\mathcal{P},$ defined by $\mu_{0}\rightsquigarrow\mu_{t},$ is continuous as well.
\end{proof}

\section{Compactifications}

In order to study the convergence of our mean-field type networks
$\mathcal{K}_{N}$ to the limiting Non-Linear Markov Process, network
$\mathcal{K},$ we want the latter to be defined on a compact state space. This
means that we have to add to the graph $G$ the sites $G_{\infty}$ lying at
infinity, obtaining the extended graph $\bar{G}=G\cup G_{\infty},$ and to
allow infinite queues at each node $v\in\bar{G}.$ We then have to extend our
dynamics to this bigger system. The way it is chosen among several natural
options is of small importance, since, as we will show, if the initial state
of our network assigns zero probability to various infinities, then the same
holds for all times.

The compactification plays here a technical role. It allows us to use some
standard theorems of convergence of Markov processes, and probably can be
avoided. The benefit it brings is that certain observables can be continuously
extended to a larger space.

\subsection{Compactification $\bar{G}$ of the Graph $G$}

When the graph $G$ is infinite, we need to have its compactification. The
compactification we are going to define does depend on our network. Namely, it
will use the following feature of our network discipline: if a customer $c$ is
located at $v$ and its destination $D\left(  c\right)  =w,$ then the path $c$
takes to get from $v$ to $w$ is obtained using the random greedy algorithm.
According to it, $c$ chooses uniformly among all the n.n. sites, which bring
$c$ one unit closer to its goal. In principle, different disciplines on the
same graph $G$ might lead to different compactifications.

To define it we proceed as follows. Let $\gamma=\left\{  \gamma_{n}\in
G\right\}  $ be a n.n. path on $G.$ We want to define a notion of existence of
the limit $L\left(  \gamma\right)  =\lim_{n\rightarrow\infty}\gamma_{n}.$ If
the sequence $\gamma_{n}$ stabilizes, i.e. if $\gamma_{n}\equiv g\in G$ for
all $n$ large enough, we define $L\left(  \gamma\right)  =g.$ To proceed, for
any $v\in G$ we define the Markov chain $P^{v}$ on $G.$ It is a n.n. random
walk, such that at each step a walker makes its distance to $v$ to decrease by
$1.$ If he has several such choices, he choose one of them uniformly.
Therefore the transition probabilities $P^{v}\left(  u,w\right)  $ are given
by the function $e_{u,v}\left(  w\right)  ,$ defined in $\left(
\ref{11}\right)  .$ If $T$ is some integer time moment, and $u$ and $v$ are at
the distance bigger than $T,$ then the $T$-step probability distribution
$P_{u}^{v,T}$ on the trajectories starting at $u$ and heading towards $v$ is defined.

Let now the path $\gamma$ be given, with $\gamma_{n}\rightarrow\infty.$ We say
that the limit $L\left(  \gamma\right)  =\lim_{n\rightarrow\infty}\gamma_{n}$
exists, if for any $u\in G$ and any $T$, the limit $\lim_{n\rightarrow\infty
}P_{u}^{\gamma_{n},T}$ exists. For two paths $\gamma^{\prime},\gamma
^{\prime\prime}$ we say that $L\left(  \gamma^{\prime}\right)  =L\left(
\gamma^{\prime\prime}\right)  $ iff both limits exist and moreover for any
$u\in G$ and any $T$ the measures $P_{u}^{\gamma_{n}^{\prime},T}$ and
$P_{u}^{\gamma_{n}^{\prime\prime},T}$ coincide for all $n\geq n\left(
u,T,\gamma^{\prime},\gamma^{\prime\prime}\right)  $ large enough.

Consider the union $\bar{G}=G\cup G^{\infty}\equiv G\cup\left\{  L\left(
\gamma\right)  :\gamma\text{ is a n.n. path on }G\right\}  .$ It is easy to
see that a natural topology\textbf{ } on $\bar{G}$ makes it into a compact.
For example, consider the case $G=\mathbb{Z}^{2}.$ Let $f:$ $\mathbb{Z}%
^{2}\rightarrow\mathbb{R}^{2}$ be the following embedding: $f\left(
n,m\right)  =\left(  \mathrm{sign}\left(  n\right)  \left(  1-\frac
{1}{\left\vert n\right\vert }\right)  ,\mathrm{sign}\left(  m\right)  \left(
1-\frac{1}{\left\vert m\right\vert }\right)  \right)  .$ Then the closure
$\bar{G}\ $will be the closure of the image of $f$ in $\mathbb{R}^{2}.$

We can make $\bar{G}$ into a graph. To do this we need to specify pairs of
vertices which are connected by an edge. If $v^{\prime},v^{\prime\prime}$ both
belong to $G,$ then they are connected in $\bar{G}$ iff they are connected in
$G.$ If $v^{\prime}$ is in $G,$ while $v^{\prime\prime}\in G^{\infty},$ then
they are never connected. Finally, if $v^{\prime},v^{\prime\prime}\in
G^{\infty},$ then they are connected iff one can find a pair of paths
$\gamma^{\prime},\gamma^{\prime\prime}\rightarrow\infty$ such that $L\left(
\gamma^{\prime}\right)  =v^{\prime},\ L\left(  \gamma^{\prime\prime}\right)
=v^{\prime\prime},$ and the sites $\gamma_{n}^{\prime},\gamma_{n}%
^{\prime\prime}\in G$ are n.n. In particular, every vertex $v\in G^{\infty}$
has a loop attached. Note that the graph $\bar{G}$ is not connected.

\subsection{Extension of the Network to $\bar{G}$}

This is done in a natural straightforward way. Now we have servers and queues
also at `infinite' sites. Note that the customers at infinity cannot get to
the finite part $G$ of $\bar{G}.$ Also, the customers from $G$ cannot get to
$G^{\infty}$ in finite time.

\subsection{Compactification of $M\left(  v\right)  $}

In order to make the manifold $M\left(  v\right)  ,$ $v\in V\left(  G\right)
$ compact we have to add to it various `infinite objects': infinite words,
infinite waiting times and infinite destinations. Since the destinations of
the customers in our network are vertices of the underlying graph, the
compactification of $M\left(  v\right)  $ will depend on it. For the last
question, we use the graph $\bar{G},$ which is already compact. The manifold
of possible queues at $v$ is a disjoint union of the positive orthants
$\mathbb{R}_{w}^{+},$ where $w$ is a finite word, describing the queue
$q_{v}=\left\{  c_{i}\right\}  \equiv\left\{  c_{i}^{v}\right\}  $ at $v.$ The
letters of the word $w$ are classes $\varkappa_{i}\in K$ of the customers plus
the final addresses $v_{i}=D\left(  c_{i}\right)  \in\bar{G}$ of them. So the
set of the values of each letter is compact. We have to compactify the set $W$
of the finite ordered words $w.$

To do it we denote by $O\left(  w\right)  $ the reordering of $w,$ which
corresponds to the order of service of the queue $w.$ This is just a
permutation of $w.$ We say that a sequence $w_{i}$ of finite words converge as
$i\rightarrow\infty,$ iff the sequence of words $O\left(  w_{i}\right)  $
converge coordinate-wise. If this is the case we denote by $\bar{O}%
=\lim_{i\rightarrow\infty}O\left(  w_{i}\right)  ,$ and we say that
$\lim_{i\rightarrow\infty}w_{i}=\bar{O}.$ We denote by $\bar{W}$ the set of
all finite words, $w\equiv\left(  w,O\left(  w\right)  \right)  $,
supplemented by all possible limit points $\bar{O}.$ We define the topology on
$\bar{W}$ by saying that a sequence $w_{i}\in\bar{W}$ is converging iff the
sequence $O\left(  w_{i}\right)  $ converge coordinate-wise. In other words,
we put on $\bar{W}$ the Tikhonov topology. Since $K$ is finite, $\bar{W}$ is
compact in the topology of the pointwise convergence.

According to what was said in the Section \ref{comb}, our service discipline
$\left(  \equiv\text{i.e. the function }O\right)  $ has the following
property. Let the sequence of finite words $w_{i}\in W$ converge in the above
sense. Let $c$ be a customer, and consider the new sequence $w_{i}\cup c\in
W,$ where customer $c$ is the last arrived. Then we have the implication%
\[
\lim_{i\rightarrow\infty}O\left(  w_{i}\right)  \text{ exists }\Rightarrow
\lim_{i\rightarrow\infty}O\left(  w_{i}\cup c\right)  \text{ exists.}%
\]

The continuity of the transition probabilities in this topology on the set of
queues is easy to see; indeed, the closeness of the two queues $q$ and
$q^{\prime}$ means that the first $k$ customers served in both of them are the
same. But then the transition probabilities $P_{T}\left(  q,\cdot\right)  $
and $P_{T}\left(  q^{\prime},\cdot\right)  $ differ by $o\left(  T^{k}\right)
.$ So the extended process is Feller, as well as the initial one.

The compactifications $\mathbb{\bar{R}}_{w}^{+}$ of the orthants
$\mathbb{R}_{w}^{+}$ are defined in the obvious way: they are the products of
$\left\vert w\right\vert $ copies of the compactifications $\mathbb{\bar{R}%
}^{+}=\mathbb{R}^{+}\cup\infty.$ For the infinite words $\bar{O}$ we consider
the infinite products, in the Tikhonov topology. The notion of the convergence
in the union $\cup_{w\in\bar{W}}\mathbb{\bar{R}}_{w}^{+}$ is that of the
coordinate-wise convergence.

The properties of the service times, formulated in Sect. \ref{serv}, allow us
to extend the relevant rates in a continuous way to a function on $\tau
\in\mathbb{\bar{R}}^{+}.$ Moreover, the analog of Proposition \ref{Fe} holds.

\section{\label{conv} The proof of Convergence}

Let $\Omega_{N},\Omega:X\rightarrow X$ are (unbounded) operators on the Banach
space $X.$ We are looking for the conditions of convergence of the semigroups
$\exp\left\{  t\Omega_{N}\right\}  \rightarrow\exp\left\{  t\Omega\right\}  $
on $X$ as $N\rightarrow\infty.$

We will use the following version of the Trotter-Kurtz theorem, which is
Theorem 6.1 from Chapter 1 of \cite{EK}, with the core characterization taken
from Proposition 3.3 of the same Chapter 1, where a \textbf{core} of $\Omega$
is a dense subspace $\bar{X}\subset X$ such that $\forall\psi\in\bar{X}$
\begin{equation}
\exp\left\{  t\Omega\right\}  \left(  \psi\right)  \in\bar{X}. \label{013}%
\end{equation}

\begin{theorem}
\label{TK} Let $X^{1}\subset X^{2}\subset...\subset X$ be a sequence of
subspaces of Banach space $X.$ We suppose that we have projectors $\pi
^{N}:X\rightarrow X^{N},$ such that for every $f\in X$ we have $f^{N}=\pi
^{N}\left(  f\right)  \rightarrow f,$ as $N\rightarrow\infty.$ Suppose we have
(strongly continuous contraction) semigroups $\exp\left\{  t\Omega
_{N}\right\}  $ on $X^{N},$ and we want a condition ensuring that for every
$f$
\begin{equation}
\exp\left\{  t\Omega_{N}\right\}  f^{N}\rightarrow\exp\left\{  t\Omega
\right\}  f. \label{011}%
\end{equation}
For this it is sufficient that for every $f$ in the \textit{core} $\bar{X}$ of
$\Omega$ we have
\begin{equation}
\left\Vert \Omega_{N}\left(  f^{N}\right)  -\Omega\left(  f^{N}\right)
\right\Vert \rightarrow0. \label{012}%
\end{equation}

\end{theorem}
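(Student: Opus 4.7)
The plan is to follow the classical Trotter--Kurtz argument: use the core assumption (\ref{013}) to set up a Duhamel-type interpolation between the two semigroups, control the integrand pointwise by the convergence hypothesis (\ref{012}), and use the contraction property of the $T_{N}(t):=\exp\{t\Omega_{N}\}$ to supply the uniform bound needed to pass to the limit. A final three-epsilon argument extends the conclusion from the dense core $\bar X$ to all of $X$, since $\|T_{N}(t)\pi^{N}f-T_{N}(t)\pi^{N}\tilde f\|\leq\|f-\tilde f\|+o(1)$.

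For $f\in\bar X$, I interpolate between $T_{N}(t)\pi^{N}f$ and $\pi^{N}T(t)f$ by setting
\[
u_{N}(s):=T_{N}(s)\,\pi^{N}\,T(t-s)f,\qquad s\in[0,t].
\]
By the core-invariance (\ref{013}), $g(s):=T(t-s)f\in\bar X$ for all $s$, so $\pi^{N}g(s)\in X^{N}$ lies in the domain of $\Omega_{N}$. Differentiating,
\[
u_{N}'(s)=T_{N}(s)\bigl[\Omega_{N}\,\pi^{N}g(s)-\pi^{N}\,\Omega\,g(s)\bigr],
\]
and integrating together with $\|T_{N}(s)\|\leq 1$ yields
\[
\bigl\|T_{N}(t)\pi^{N}f-\pi^{N}T(t)f\bigr\|\leq\int_{0}^{t}\bigl\|\Omega_{N}\pi^{N}g(s)-\pi^{N}\Omega g(s)\bigr\|\,ds.
\]
Combined with $\|\pi^{N}T(t)f-T(t)f\|\to 0$, which is the defining property of the $\pi^{N}$, this reduces the problem to killing the integral.

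For fixed $s$, I split the integrand as
\[
\Omega_{N}\pi^{N}g(s)-\pi^{N}\Omega g(s)=\bigl[\Omega_{N}\pi^{N}g(s)-\Omega g(s)\bigr]+\bigl[\Omega g(s)-\pi^{N}\Omega g(s)\bigr].
\]
The second bracket tends to $0$ by strong convergence $\pi^{N}\to I$. For the first, hypothesis (\ref{012}) applied to $g(s)\in\bar X$ gives $\|\Omega_{N}\pi^{N}g(s)-\Omega\pi^{N}g(s)\|\to 0$, and since $\pi^{N}g(s)\to g(s)$ with $g(s)$ in the core (so $\Omega$ makes sense in a continuous fashion), the first bracket also vanishes. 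Hence the integrand goes to $0$ pointwise in $s$.

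The main obstacle is the passage from pointwise to integral convergence, because $\Omega_{N}$ and $\Omega$ are unbounded and one needs a uniform (in $N,s$) bound on $\|\Omega_{N}\pi^{N}g(s)\|+\|\pi^{N}\Omega g(s)\|$ on $[0,t]$. The curve $\{g(s):s\in[0,t]\}$ is norm-continuous and lies in $\bar X$, hence is a compact subset of $\bar X$; continuity of $s\mapsto\Omega g(s)$ on the core together with (\ref{012}) then lets one extract a uniform majorant and apply dominated convergence. To avoid touching unbounded generators directly, the cleanest implementation of this step -- and the route taken in \cite{EK} -- is to convert (\ref{012}) into resolvent convergence $(\lambda-\Omega_{N})^{-1}\pi^{N}f\to(\lambda-\Omega)^{-1}f$ on the core, and then invoke the Hille--Yosida/Trotter--Kato equivalence between resolvent convergence and strong semigroup convergence; this replaces the delicate domination argument by the a priori bound $\|(\lambda-\Omega_{N})^{-1}\|\leq\lambda^{-1}$ that comes for free from the contraction hypothesis.
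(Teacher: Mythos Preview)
The paper does not prove this theorem at all: it is quoted verbatim as a tool, with the proof delegated to \cite{EK} (Theorem~6.1 and Proposition~3.3 of Chapter~1). So there is no ``paper's own proof'' to compare against; your write-up is a reconstruction of the classical argument rather than an alternative to anything the authors do.

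As a reconstruction it is broadly faithful to the Trotter--Kurtz machinery. Two remarks. First, the differentiation of $u_{N}(s)=T_{N}(s)\pi^{N}T(t-s)f$ tacitly assumes $\pi^{N}g(s)\in D(\Omega_{N})$; this is not automatic from $g(s)\in\bar X$ alone and in \cite{EK} is handled by the more flexible hypothesis ``for each $f\in\bar X$ there exist $f_{N}\in D(\Omega_{N})$ with $f_{N}\to f$ and $\Omega_{N}f_{N}\to\Omega f$'' rather than by forcing $f_{N}=\pi^{N}f$. Second, you are right that the domination step for the integral is the sore point of the direct Duhamel route, and that the resolvent reformulation (Trotter--Kato) is how \cite{EK} actually closes the argument; your final paragraph is therefore not an optional alternative but the substance of the cited proof. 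In short: your outline is correct in spirit and lands on the same resolvent argument the reference uses, but the paper itself simply invokes the result without reproving it.
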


We are going to apply it to our situation. The main ideas of this application
were developed in the paper \cite{KR}.

The strong continuity of the semigroups $\exp\left\{  t\Omega_{N}\right\}  $
is straightforward, while that for the semigroup $\exp\left\{  t\Omega
\right\}  $ follows from the fact that the trajectories $S_{t}\mu$ are
continuous in $t$. After compactification, the set of probability measures
$\mu\in\mathcal{P}$ becomes a compact, $\mathcal{\bar{P}},$ and so the family
$S_{t}\mu$ is equicontinuous. This implies the strong continuity.

It is clear that the operation $\Omega_{N}\left(  f^{N}\right)  $ consists of
computing finite differences for the function $f$ at some atomic measures,
while $\Omega\left(  f^{N}\right)  $ is the operation of computing the
derivatives at the same atomic measures. We will have convergence $\left(
\ref{012}\right)  $ in the situation when the derivatives exists and can be
approximated by the finite differences. Therefore we have to choose
differentiable functions for our space $\bar{X}.$ We will check $\left(
\ref{012}\right)  $ in the Section \ref{finite}.

We then need to check that this space of differentiable functions is preserved
by the semigroup $\exp\left\{  t\Omega\right\}  $ (core property). The
function $\exp\left\{  t\Omega\right\}  f\left(  \mu\right)  $ is just
$f\left(  S_{t}\mu\right)  ,$ so if $f$ is differentiable in $\mu$, then the
differentiability of $f\left(  S_{t}\mu\right)  $ follows from that of
$S_{t}\mu.$

We take for our function space $X$ the space $\mathcal{C}_{u}\left(
\mathcal{\bar{P}}\right)  ,$ with $\mathcal{\bar{P}}$ being the compactified
version of $\mathcal{P},$ the space of product probability measures on $\Pi
M\left(  v\right)  .$ So we consider continuous functions on $\mathcal{P},$
which are bounded and uniformly continuous. $\mathcal{P}$ in turn is a
subspace in the space $\Pi\mathcal{C}^{\ast}\left(  M\left(  v\right)
\right)  $ of linear functionals on $\Pi\mathcal{C}\left(  M\left(  v\right)
\right)  $ (with weak convergence topology), while $\mathcal{C}_{u}\left(
M\left(  v\right)  \right)  $ is the space of bounded uniformly continuous
functions on the combs $M\left(  v\right)  .$ However, we will use a different
norm on $\mathcal{P}.$ Namely, we consider the space $\mathcal{C}_{1}\left(
M\left(  v\right)  \right)  $ of functions $f$ which are continuous and have
continuous derivatives $f^{\prime}$, and we put $\left\vert \left\vert
f\right\vert \right\vert _{1}=\left\vert \left\vert f\right\vert \right\vert
+\left\vert \left\vert f^{\prime}\right\vert \right\vert .$ The space
$\mathcal{P}$ belongs to the dual space $\Pi\mathcal{C}_{1}^{\ast}\left(
M\left(  v\right)  \right)  ,$ and we will define the norm $\left\vert
\left\vert \cdot\right\vert \right\vert _{1}$on $\mathcal{P}$ to be the
restriction of the natural norm on $\Pi\mathcal{C}_{1}^{\ast}\left(  M\left(
v\right)  \right)  .$

To estimate the norm of the Frechet differential we have to consider a
starting measure $\mu^{1}=\Pi\mu_{v}\left(  t=0\right)  ,$ its perturbation,
$\mu^{2}=\Pi\left(  \mu_{v}+h_{v}\right)  ,$ with $h=\left\{  h_{v}\right\}  $
non-trivial for finitely many $v$-s (say), the perturbation having norm
$\approx\sum_{v}\left\vert \left\vert h_{v}\right\vert \right\vert ,$ then to
take the difference $\mu^{2}\left(  T\right)  -\mu^{1}\left(  T\right)  $ and
to write it as $\Phi\left(  T,\mu^{1}\right)  h+O\left(  \left\vert \left\vert
h\right\vert \right\vert ^{2}\right)  ,$ and finally to show that the norm of
the operator $\Phi\left(  T,\mu^{1}\right)  $ is finite.

Note first that it is enough to prove this for $T$ small, the smallness being
uniform in all relevant parameters.

Now, if the increment $h$ is small (even only in the (weaker) $\left\vert
\left\vert \cdot\right\vert \right\vert _{1}$ sense, i.e. only from the point
of view of the smooth functions -- like, for example, a small shift of a
$\delta$-measure), then all the flows in our network, started from $\mu^{1}%
\ $and $\mu^{2}$ differ only a little, during a short time, its shortness
being a function of the service time distributions only. The amplitude of the
flow difference is of the order of $D\times\mathcal{F}\times\left\vert
\left\vert h\right\vert \right\vert _{1},$ where $D$ is the maximal degree of
the graph $G,$ and $\mathcal{F}$ is defined by $\left(  \ref{41}\right)  .$ In
fact, it can be smaller; it is attained for the situations when a server,
being empty in the state $\mu^{1},$ becomes non-empty in the $h-$perturbed
state. The order is computed in the stronger $\left\vert \left\vert
\cdot\right\vert \right\vert _{0}$ norm. Therefore, after time $T$ the norm of
the difference is such that
\begin{equation}
\left\vert \left\vert \mu^{2}\left(  T\right)  -\mu^{1}\left(  T\right)
\right\vert \right\vert \leq\left\vert \left\vert h\right\vert \right\vert
+T\times D\times\mathcal{F}\times\left\vert \left\vert h\right\vert
\right\vert \ \mathbf{,} \label{19}%
\end{equation}
which explains our claim about the operator $\Phi\left(  T,\mu^{1}\right)  .$
We will give a formal proof in Section \ref{FD}.

\subsection{\label{finite} Generator Comparison}

We start with the finite MF-type network, made from $N$ copies of the initial network.

We first describe the process as a process of the network containing
$N\left\vert G\right\vert $ servers, and then do the factorization by the
product of $\left\vert G\right\vert $ permutation groups $S_{N}.$

The former one will be described only briefly. At each of the $N\left\vert
G\right\vert $ servers, there is a queue of customers. Some of the queues can
be empty. As time goes, the queues evolve -- due to (1) the arrivals of
external customers; (2) the end of the service of a customer, which then
leaves the network; (3) the end of the service of a customer, which then jumps
to the next server; (4) the interchange of two servers. Each of these events
leads to a jump of our process. If none of them happens, the process evolves
continuously, with the help of the time-shift semigroup. 

After the factorization by the permutation group $\left(  \mathcal{S}%
_{N}\right)  ^{G}$ the configuration at any vertex $v\in G$ can be
conveniently described by an atomic probability measure $\Delta_{N}^{v}$ on
$M\left(  v\right)  $ of the form $\sum_{k=1}^{N}\frac{1}{N}\delta\left(
q_{v,k},\tau\right)  ,$ where $\tau$ is the vector of time durations some of
the customers from the queue $q_{v,k}$ (in particular the customer $C\left(
q_{v,k}\right)  $) were already under the service; the case of empty queue is included.

We have a semigroup $S_{N}$\textbf{ }and its generator $\Omega_{N};$ the
existence of them is straightforward. $\Omega_{N}$ acts on functions $F$ on
measures $\mu_{N}\in\mathcal{P}_{N},$ which are atomic measures with atoms of
weight $\frac{1}{N}.$

Our goal is now the following. Let $\mu_{N}\rightarrow\mu,$ and $F$ be a
smooth function on measures. Let us look at the limit $\Omega_{N}\left(
F\right)  \left(  \mu_{N}\right)  $ and the value $\Omega\left(  F\right)
\left(  \mu\right)  .$ Since $F$ is smooth,\textbf{ }we are able replace
certain differences by derivatives; after this, we will see the convergence
$\Omega_{N}\left(  F\right)  \left(  \mu_{N}\right)  \rightarrow\Omega\left(
F\right)  \left(  \mu\right)  $ in a transparent way.

For this, we write the multiline formula $\left(  \ref{031}-\ref{034}\right)
$ for the operator $\Omega_{N},$ applied to a function $F.$ On each $M\left(
v\right)  $, we have to take a probability measure $\Delta_{N}^{v}$ of the
form $\sum_{k=1}^{N}\frac{1}{N}\delta\left(  q_{v,k},\tau\right)  ,$ where
$\tau$ is the amount of service already received by customer $C\left(
q_{v,k}\right)  $ of $q_{v,k}$ (at the position $i^{\ast}\left(
q_{v,k}\right)  $ in queue $q_{v,k}$). Let $\Delta_{N}=\left\{  \Delta_{N}%
^{v}\right\}  .$ Then%
\begin{align}
&  \left(  \Omega_{N}\left(  F\right)  \right)  \left(  \Delta_{N}\right)
\nonumber\\
&  =\sum_{v}\sum_{k}\frac{\partial F}{\partial r\left(  q_{v,k},\tau\left(
C\left(  q_{v,k}\right)  \right)  \right)  }\left(  \Delta_{N}\right)
\ \text{ }\label{031}\\
&  +\sum_{v}\sum_{k}\sum_{v^{\prime}\text{n.n.}v}\sum_{k^{\prime}}\frac{1}%
{N}e_{v,D\left(  C\left(  q_{v,k}\right)  \right)  }\left(  v^{\prime}\right)
\sigma_{f}\left(  q_{v,k},q_{v,k}\ominus C\left(  q_{v,k}\right)  \right)
\times\nonumber\\
&  \times\left[  F\left(  J_{v,v^{\prime};k,k^{\prime}}\left(  \Delta
_{N}\right)  \right)  -F\left(  \Delta_{N}\right)  \right]
\end{align}
where, for a directed edge $v,v^{\prime}$ and for a pair of queues $q_{v,k},$
$q_{v^{\prime},k^{\prime}}$, we denote by $J_{v,v^{\prime};k,k^{\prime}%
}\left(  \Delta_{N}\right)  $ a new atomic measure, which is a result of the
completion of the service of a customer $C\left(  q_{v,k}\right)  $ in a queue
$q_{v,k}$ at the server $v$ and its subsequent jump into the queue
$q_{v^{\prime},k^{\prime}},$ increasing thereby the length of the queue
$q_{v^{\prime},k^{\prime}}$ at $v^{\prime}$ by one;%

\begin{align}
&  +\sum_{v}\sum_{k}\sum_{v^{\prime}}\sum_{\varkappa}\lambda\left(
\varkappa,v,v^{\prime}\right)  \times\label{032}\\
&  \times\left[  F\left(  \Delta_{N}-\frac{1}{N}\delta\left(  q_{v,k}\right)
+\frac{1}{N}\delta\left(  q_{v,k}\oplus c^{v}\left(  \varkappa,0,v^{\prime
}\right)  \right)  \right)  -F\left(  \Delta_{N}\right)  \right]
\end{align}
-- here we see the arrival of a new customer of class $\varkappa$ with a
destination $v^{\prime};$%
\begin{align}
&  +\sum_{v}\sum_{k}\sum_{v^{\prime}:\mathrm{dist}\left(  v,v^{\prime}\right)
\leq1}\delta\left(  D\left(  C\left(  q_{v,k}\right)  \right)  =v^{\prime
}\right)  \sigma_{f}\left(  q_{v,k},q_{v,k}\ominus C\left(  q_{v,k}\right)
\right)  \times\label{033}\\
&  \times\left[  F\left(  J_{v;k}\left(  \Delta_{N}\right)  \right)  -F\left(
\Delta_{N}\right)  \right]
\end{align}
-- here we account for the customers that leave the network; the operator
$J_{v;k}\left(  \Delta_{N}\right)  $ denotes the new atomic measure, which is
a result of the completion of the service of a customer $C\left(
q_{v,k}\right)  $ in a queue $q_{v,k}$ at the server $v$ and its subsequent
exit from the system: $J_{v;k}\left(  \Delta_{N}\right)  =\Delta_{N}-\frac
{1}{N}\delta\left(  q_{v,k}\right)  +\frac{1}{N}\delta\left(  q_{v,k}%
q_{v,k}\ominus C\left(  q_{v,k}\right)  \right)  $;
\begin{equation}
+\sum_{v}\sum_{k}\sum_{v^{\prime}\text{n.n.}v}\sum_{k^{\prime}}\frac{1}%
{N}\beta_{vv^{\prime}}\left[  F\left(  T_{v,k;v^{\prime},k^{\prime}}\Delta
_{N}\right)  -F\left(  \Delta_{N}\right)  \right]  \label{034}%
\end{equation}
-- here the operator $T_{v,k;v^{\prime},k^{\prime}}$ acts on the measure
$\Delta_{N}$ by exchanging the atoms $\frac{1}{N}\delta\left(  q_{v,k}\right)
$ and $\frac{1}{N}\delta\left(  q_{v^{\prime},k^{\prime}}\right)  .$

\textbf{Remark. }If our graph is infinite, the sums in $\left(  \ref{031}%
-\ref{034}\right)  $ are infinite. However, they make sense for \textit{local
}functions $F,$ as well as for the \textit{quasilocal }ones, which depend of
far-away nodes (exponentially) weakly.

Next, let us pass in the above formula to a formal limit, obtaining thus the
(formal) expression for the limiting operator $\Omega.$ It acts on functions
$F$ on all probability measures on $\cup_{v}M\left(  v\right)  .$ Let
$\Delta=\lim_{N\rightarrow\infty}\Delta_{N}.$ We will need the
differentiability of $F$ at $\Delta,$ i.e. the existence of the Frechet
differential $F^{\prime}$ at $\Delta.$ This differential will be denoted also
by $F_{\Delta}^{\prime}\left(  \cdot\right)  ;$ it is a linear functional on
the space of tangent vectors to $\mathcal{P}$ at $\Delta\in\mathcal{P}$.
Assuming the existence of this differential, we have another multiline
expression $\left(  \ref{015}-\ref{019}\right)  :$%
\begin{align}
\left(  \Omega\left(  F\right)  \right)  \left(  \Delta\right)   &  =\left(
\hat{\sigma}\left(  F\right)  \right)  \left(  \Delta\right) \label{015}\\
&  +\sum_{v}\sum_{v^{\prime}\text{n.n.}v}F_{\Delta}^{\prime}e_{v,D\left(
C\left(  \cdot\right)  \right)  }\left(  v^{\prime}\right)  \sigma_{f}%
\times\left(  \zeta_{vv^{\prime}}\left(  \Delta\right)  -\Delta\right)
\nonumber
\end{align}
-- where $\zeta_{vv^{\prime}}\left(  \Delta\right)  -\Delta$ is a (signed)
measure (see $\left(  \ref{53}\right)  $), and where $e_{v,D\left(  C\left(
\cdot\right)  \right)  }\left(  v^{\prime}\right)  \sigma_{f}\times\left(
\zeta_{vv^{\prime}}\left(  \Delta\right)  -\Delta\right)  $ denotes the
measure having density

\noindent$e_{v,D\left(  C\left(  q\right)  \right)  }\left(  v^{\prime
}\right)  \sigma_{f}\left(  q_{v},q_{v}\ominus C\left(  q_{v}\right)  \right)
$ with respect to $\left(  \zeta_{vv^{\prime}}\left(  \Delta\right)
-\Delta\right)  \left(  dq\right)  ;$%
\begin{equation}
+\sum_{v}\sum_{v^{\prime}}\sum_{\varkappa}\lambda\left(  \varkappa
,v,v^{\prime}\right)  F_{\Delta}^{\prime}\left(  \chi_{v,v^{\prime};\varkappa
}\left(  \Delta\right)  -\Delta\right)  \label{016}%
\end{equation}
-- here\textbf{ }$\chi_{v,v^{\prime};\varkappa}:M\left(  v\right)  \rightarrow
M\left(  v\right)  $ is the embedding, corresponding to the arrival to $v$ of
the external customer of class $\varkappa$ and destination $v^{\prime},$ see
$\left(  \ref{51}\right)  ;$%
\begin{equation}
+\sum_{v}F_{\Delta}^{\prime}\left(  \sigma_{f}\times\left(  \psi_{nn}%
^{v}\left(  \Delta\right)  -\Delta\right)  \right)  \label{017}%
\end{equation}
-- here
\begin{equation}
\psi_{nn}^{v}\left(  q\right)  =\left\{
\begin{array}
[c]{cc}%
\psi^{v}\left(  q\right)  & \text{ for }q\text{ with }\mathrm{dist}\left(
v,D\left(  C\left(  q\right)  \right)  \right)  \leq1\\
q & \text{ for }q\text{ with }\mathrm{dist}\left(  v,D\left(  C\left(
q\right)  \right)  \right)  >1
\end{array}
\right.  , \label{018}%
\end{equation}
while $\psi^{v}:M\left(  v\right)  \rightarrow M\left(  v\right)  $ is the
projection, see $\left(  \ref{52}\right)  ,$ and the term $\sigma_{f}%
\times\left(  \psi_{nn}^{v}\left(  \Delta\right)  -\Delta\right)  $ is the
(signed-)measure, having density $\sigma_{f}\left(  q\right)  $ with respect
to the measure $\left(  \psi_{nn}^{v}\left(  \Delta\right)  -\Delta\right)
\left(  dq\right)  ;$%

\begin{equation}
+\sum_{v}\sum_{v^{\prime}\text{n.n.}v}\beta_{v^{\prime}v}F_{\Delta}^{\prime
}\left(  \left(  T_{v^{\prime}v}\Delta\right)  -\Delta\right)  ; \label{019}%
\end{equation}
-- here the operator $T_{v^{\prime}v}$ acts on the measure $\Delta$ in the
following way: it replaces the component $\Delta_{v}$ of the measure $\Delta$
by the measure $\Delta_{v^{\prime}}$ (via identification between $M\left(
v\right)  $ and $M\left(  v^{\prime}\right)  $).

We now check that the limiting operator $\Omega$ is the same one we were
dealing with in our study of the non-linear Markov process, $\left(
\ref{001}-\ref{02}\right)  .$

\begin{proposition}
\label{mu} The formula $\left(  \ref{015}-\ref{019}\right)  $ can be written
in the form
\begin{equation}
\left(  \Omega\left(  F\right)  \right)  \left(  \mu\right)  =\left(
\hat{\sigma}\left(  F\right)  \right)  \left(  \mu\right)  +\left(  F^{\prime
}\left(  \mu\right)  \right)  \left(  g\left(  \mu\right)  \right)  ,
\label{130}%
\end{equation}
where $\hat{\sigma}$ is the generator of the time\textbf{-}shift semigroup,
acting on our manifold, $F^{\prime}$ is the Frechet differential of $F$, and
the (signed) measure $g\left(  \mu\right)  $ is given by the r.h.s. of
$\left(  \ref{001}-\ref{02}\right)  .$
\end{proposition}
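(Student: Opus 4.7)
The plan is to verify (\ref{130}) by matching each of the five summands on the right-hand side of (\ref{015})--(\ref{019}) with its counterpart in (\ref{001})--(\ref{02}). Since $F'(\mu)$ is linear in its signed-measure argument, pairing it against a signed measure distributes over sums, and it suffices to check the correspondence at the level of signed measures. I read (\ref{130}) as allocating the deterministic flow $\mathcal{A}$ of (\ref{06}) to the stand-alone $\hat\sigma(F)(\mu)$ term, and the four jump pieces $\mathcal{B}+\mathcal{C}+\mathcal{D}+\mathcal{E}$ to $F'(\mu)(g(\mu))$.

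I would dispatch three easy identifications first. The operator $\hat\sigma$ is, by definition, the generator of the deterministic unit-speed translation along the vector field $r$ on $\cup_v M(v)$; its action on a smooth functional of $\mu$ equals $F'(\mu)$ applied to the adjoint (continuity-equation) flow $-(d/dr_{i^{\ast}(q_v)})\mu_v$, which is exactly $\mathcal{A}$. The external-arrival term (\ref{016}) uses the embedding $\chi_{v,v';\varkappa}$ of (\ref{51}) to append a fresh class-$\varkappa$ customer with destination $v'$ at the tail of $q_v$; weighted by $\lambda(\varkappa,v,v')$, this reproduces the $\sigma_{e}$-contribution to $\mathcal{B}+\mathcal{C}$, using the rate (\ref{42}). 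The server-exchange term (\ref{019}) uses $T_{v'v}$ to replace the $v$-marginal of $\Delta$ by its $v'$-marginal; weighted by $\beta_{v'v}$, this is precisely the density in $\mathcal{E}$ of (\ref{02}).

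The subtler step is to recognize that (\ref{017}) together with the second line of (\ref{015}) reconstitutes both the entirety of $\mathcal{D}$ and the $\sigma_{tr}$-portion of $\mathcal{B}+\mathcal{C}$. The map $\zeta_{vv'}$ from (\ref{53}) acts on $(q_v,q_{v'})$ by simultaneously removing $C(q_v)$ from $q_v$ and appending a new copy to $q_{v'}$. Unpacking $F'_{\Delta}\bigl[e_{v,D(C(\cdot))}(v')\sigma_f\times(\zeta_{vv'}(\Delta)-\Delta)\bigr]$ against a product test function on $\prod_v M(v)$ and using the product form $\mu=\prod_v\mu_v$, the contribution splits into a $v$-marginal piece and a $v'$-marginal piece. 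Summing the $v$-marginal piece over $v'\in\mathcal{N}(v)$ and using the identity $\sum_{v'\in\mathcal{N}(v)} e_{v,D(C(q_v))}(v')=\mathbf{1}[\mathrm{dist}(v,D(C(q_v)))>1]$ from (\ref{11}), then adding (\ref{017}), whose indicator in (\ref{018}) is the complement $\mathbf{1}[\mathrm{dist}(v,D(C(q_v)))\le 1]$, yields the full, unrestricted service-completion balance $\mathcal{D}$. Summing the $v'$-marginal piece over $v\in\mathcal{N}(v')$ and integrating against $d\mu_v(q_v)$ reproduces exactly the transit rate $\sigma_{tr}^{\mu}$ of (\ref{0012}), hence the $\sigma_{tr}$-portion of $\mathcal{B}+\mathcal{C}$.

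The main obstacle is this last bookkeeping step: one must correctly split $\zeta_{vv'}(\mu)-\mu$ into its two marginal effects and verify that the complementary indicators in (\ref{11}) and (\ref{018}) partition service-completion events into the transit and exit regimes without overlap or omission. The product structure of $\mu$ is what makes the two marginals decouple cleanly, and it is also where the nonlinearity emerges, namely in the coupling of $\mu_v$ with the rate seen at $v'$ through $\sigma_{tr}^{\mu}$. Once each $F'_{\Delta}[\cdots]$ is written as an integral of a test function against an explicit signed measure, the identities (i)--(v) reduce to direct algebraic checks, completing the proof.
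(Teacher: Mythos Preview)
Your proposal is correct and follows the same term-by-term matching as the paper's own proof. There is one noteworthy difference in bookkeeping: the paper pairs the entire $\zeta_{vv'}$ term of (\ref{015}) with the $\sigma_{tr}$ pieces of $\mathcal{B}+\mathcal{C}$ and pairs (\ref{017}) alone with $\mathcal{D}$, whereas you split $\zeta_{vv'}(\Delta)-\Delta$ into its $v$- and $v'$-marginal effects and then combine the $v$-marginal (transit departures, $\mathrm{dist}>1$) with (\ref{017}) (exit departures, $\mathrm{dist}\le 1$) via the complementary indicators in (\ref{11}) and (\ref{018}) to recover the full $\mathcal{D}$. Your accounting is the more careful one, since $\psi_{nn}^v$ acts nontrivially only when $\mathrm{dist}(v,D(C(q)))\le 1$ and so cannot by itself reproduce all service completions recorded in $\mathcal{D}$; the paper's terse listing leaves this split implicit.
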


\begin{proof}
For the convenience of the reader we repeat here the equation $\left(
\ref{001}-\ref{02}\right)  .$
\begin{align*}
&  \frac{d}{dt}\mu_{v}\left(  q_{v},t\right) \\
&  =-\frac{d}{dr_{i^{\ast}\left(  q_{v}\right)  }\left(  q_{v}\right)  }%
\mu_{v}\left(  q_{v},t\right) \\
&  +\delta\left(  0,\tau\left(  e\left(  q_{v}\right)  \right)  \right)
\mu_{v}\left(  q_{v}\ominus e\left(  q_{v}\right)  \right)  \left[
\sigma_{tr}\left(  q_{v}\ominus e\left(  q_{v}\right)  ,q_{v}\right)
+\sigma_{e}\left(  q_{v}\ominus e\left(  q_{v}\right)  ,q_{v}\right)  \right]
\\
&  -\mu_{v}\left(  q_{v},t\right)  \sum_{q_{v}^{\prime}}\left[  \sigma
_{tr}\left(  q_{v},q_{v}^{\prime}\right)  +\sigma_{e}\left(  q_{v}%
,q_{v}^{\prime}\right)  \right] \\
&  +\left[  \int_{q_{v}^{\prime}:q_{v}^{\prime}\ominus C\left(  q_{v}^{\prime
}\right)  =q_{v}}d\mu_{v}\left(  q_{v}^{\prime}\right)  \sigma_{f}\left(
q_{v}^{\prime},q_{v}^{\prime}\ominus C\left(  q_{v}^{\prime}\right)  \right)
\right]  -\mu_{v}\left(  q_{v}\right)  \sigma_{f}\left(  q_{v},q_{v}\ominus
C\left(  q_{v}\right)  \right) \\
&  +\sum_{v^{\prime}\text{n.n.}v}\beta_{vv^{\prime}}\left[  \mu_{v^{\prime}%
}\left(  q_{v}\right)  -\mu_{v}\left(  q_{v}\right)  \right]  ~.
\end{align*}

\noindent The term $\left(  \hat{\sigma}\left(  F\right)  \right)  \left(
\Delta\right)  $ evidently corresponds to $-\frac{d}{dr_{i^{\ast}\left(
q_{v}\right)  }\left(  q_{v}\right)  }\mu_{v}\left(  q_{v},t\right)  ,$ and
the term
\[
\sum_{v}\sum_{v^{\prime}\text{n.n.}v}\beta_{v^{\prime}v}F_{\Delta}^{\prime
}\left(  \left(  T_{v^{\prime}v}\Delta\right)  -\Delta\right)
\]
-- to $\sum_{v^{\prime}\text{n.n.}v}\beta_{vv^{\prime}}\left[  \mu_{v^{\prime
}}\left(  q_{v}\right)  -\mu_{v}\left(  q_{v}\right)  \right]  .$ The
`external customer arrival' term
\[
\sum_{v}\sum_{v^{\prime}}\sum_{\varkappa}\lambda\left(  \varkappa,v,v^{\prime
}\right)  F_{\Delta}^{\prime}\left(  \chi_{v,v^{\prime};\varkappa}\left(
\Delta\right)  -\Delta\right)
\]
matches the terms
\[
\delta\left(  0,\tau\left(  e\left(  q_{v}\right)  \right)  \right)  \mu
_{v}\left(  q_{v}\ominus e\left(  q_{v}\right)  \right)  \sigma_{e}\left(
q_{v}\ominus e\left(  q_{v}\right)  ,q_{v}\right)  -\mu_{v}\left(
q_{v},t\right)  \sum_{q_{v}^{\prime}}\sigma_{e}\left(  q_{v},q_{v}^{\prime
}\right)  .
\]
The `intermediate service completion' term
\[
\sum_{v}\sum_{v^{\prime}\text{n.n.}v}F_{\Delta}^{\prime}\left(  e_{v,D\left(
C\left(  \cdot\right)  \right)  }\left(  v^{\prime}\right)  \sigma_{f}%
\times\left(  \zeta_{vv^{\prime}}\left(  \Delta\right)  -\Delta\right)
\right)
\]
matches the terms%
\[
\delta\left(  0,\tau\left(  e\left(  q_{v}\right)  \right)  \right)  \mu
_{v}\left(  q_{v}\ominus e\left(  q_{v}\right)  \right)  \sigma_{tr}\left(
q_{v}\ominus e\left(  q_{v}\right)  ,q_{v}\right)  -\mu_{v}\left(
q_{v},t\right)  \sum_{q_{v}^{\prime}}\sigma_{tr}\left(  q_{v},q_{v}^{\prime
}\right)  .
\]
Finally, the `final service completion' term $\sum_{v}F_{\Delta}^{\prime
}\left(  \sigma_{f}\times\left(  \psi_{nn}^{v}\left(  \Delta\right)
-\Delta\right)  \right)  $ matches
\[
\left[  \int_{q_{v}^{\prime}:q_{v}^{\prime}\ominus C\left(  q_{v}^{\prime
}\right)  =q_{v}}d\mu_{v}\left(  q_{v}^{\prime}\right)  \sigma_{f}\left(
q_{v}^{\prime},q_{v}^{\prime}\ominus C\left(  q_{v}^{\prime}\right)  \right)
\right]  -\mu_{v}\left(  q_{v}\right)  \sigma_{f}\left(  q_{v},q_{v}\ominus
C\left(  q_{v}\right)  \right)  .
\]

\end{proof}

Let us check that indeed we have the norm-convergence of the operators
$\Omega_{N}$ to $\Omega,$ the one needed in the convergence statement,
$\left(  \ref{012}\right)  $. The norm we use here is again $\left\Vert
\cdot\right\Vert _{1}.$

The precise statement we need is the following:

\begin{proposition}
Let $f$ be a function on $\mathcal{P},$ with $\left\Vert f\right\Vert _{1}$
finite. We can restrict $f$ on each subspace $\mathcal{P}_{N},$ and then apply
the operator $\Omega_{N},$ thus getting a function $\Omega_{N}f$ on
$\mathcal{P}_{N}.$ We can also restrict the function $\Omega f$ from
$\mathcal{P}$ to $\mathcal{P}_{N}.$ Then
\[
\left\Vert \Omega_{N}f-\Omega f\right\Vert _{1}^{\mathcal{P}_{N}}\leq
C_{N}\left\Vert f\right\Vert _{1},
\]
with $C_{N}\rightarrow0,$ where $\left\Vert \cdot\right\Vert _{1}%
^{\mathcal{P}_{N}}$ is the restriction of the norm $\left\Vert \cdot
\right\Vert _{1}$ to the subspace of functions of the measures $\mathcal{P}%
_{N}.$
\end{proposition}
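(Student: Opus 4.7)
The plan is to compare $\Omega_{N}f(\Delta_{N})$ and $\Omega f(\Delta_{N})$ term by term, following the matching established in Proposition \ref{mu}, and to control each discrepancy by a first-order Taylor expansion of $F$ around $\Delta_{N}$. Both generators decompose into the same four transition types: the time-shift piece $\hat{\sigma}(F)$, external arrivals, service completions (with internal jump or exit), and server exchanges. The time-shift piece is literally identical in $\Omega_{N}$ and $\Omega$ (no finite difference is involved) and can be discarded. For each of the remaining three types, $\Omega_{N}$ contains a sum of finite differences $F(\Delta_{N}+h)-F(\Delta_{N})$, whereas $\Omega$ contains the corresponding linear functional $F'_{\Delta_{N}}(H)$ applied to a suitable signed measure $H$.

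The central computation is the Taylor identity
\[
F(\Delta_{N}+h)-F(\Delta_{N}) = F'_{\Delta_{N}}(h)+\rho(\Delta_{N},h), \qquad |\rho(\Delta_{N},h)| \le \|h\|\,\omega_{F'}(\|h\|),
\]
where $\omega_{F'}$ is a modulus of continuity of the Fr\'echet differential $F'$. Since $F'$ is continuous on the compact space $\bar{\mathcal{P}}$ (a consequence of $\|f\|_{1}<\infty$ together with the compactifications of Section 4), the modulus $\omega_{F'}(\varepsilon)\to 0$ as $\varepsilon\to 0$. In each term of $\Omega_{N}$, the perturbation $h$ is a signed sum of at most four atoms of mass $1/N$, so $\|h\|=O(1/N)$.

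Next I would carry out the combinatorial assembly. Upon summing the linear parts $F'_{\Delta_{N}}(h)$ over the indices $k$ (and $k'$ where applicable), the perturbations $h$ telescope exactly into the signed measures $\chi_{v,v';\varkappa}(\Delta_{N})-\Delta_{N}$, $e_{v,D}(v')\sigma_{f}\cdot(\zeta_{vv'}(\Delta_{N})-\Delta_{N})$, $\sigma_{f}\cdot(\psi^{v}_{nn}(\Delta_{N})-\Delta_{N})$ and $T_{v'v}\Delta_{N}-\Delta_{N}$ appearing in (\ref{015}--\ref{019}); this is precisely the identity verified in Proposition \ref{mu}, read at finite $N$. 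Thus the linear parts contribute $\Omega f(\Delta_{N})$ exactly, and only the Taylor remainders survive in $\Omega_{N}f-\Omega f$. Counting the number of contributing summation indices at each node and using the uniform bounds (\ref{30}), (\ref{42}), (\ref{41}) together with the assumption on $\sum_{\varkappa,v'}\lambda$, one bounds the total remainder at node $v$ by $C_{v}\cdot\omega_{F'}(1/N)\cdot\|f\|_{1}$, with $C_{v}$ depending only on $D(G)$, $|K|$, $\mathcal{F}$ and $\mathbf{\beta}$.

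The main obstacle is summing over $v$ in the infinite-graph case: the naive bound $\sum_{v} C_{v}\,\omega_{F'}(1/N)$ diverges. I would overcome this by invoking the quasi-locality hypothesis imposed on the test functions $f$: the differential $F'_{\Delta}$ is supported, up to exponentially small errors, on a finite ball $B_{R}\subset G$. Choosing $R=R(N)$ growing slowly enough that the tail loss outside $B_{R}$ stays below $\omega_{F'}(1/N)$ (possible because the degree of $G$ is uniformly bounded by $D(G)$, so $|B_{R}|$ grows only exponentially), one gets $\|\Omega_{N}f-\Omega f\|_{1}^{\mathcal{P}_{N}}\le C(G)\,\omega_{F'}(1/N)\,\|f\|_{1}$, which is the claimed estimate with $C_{N}=C(G)\,\omega_{F'}(1/N)\to 0$. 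All remaining steps are routine bookkeeping.
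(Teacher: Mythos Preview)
Your proposal is correct and follows essentially the same route as the paper: term-by-term comparison of $\left(\ref{031}\text{--}\ref{034}\right)$ with $\left(\ref{015}\text{--}\ref{019}\right)$, linearization of each finite increment $F(\Delta_{N}+h)-F(\Delta_{N})$ around $\Delta_{N}$, observation that the linear pieces telescope into the signed measures $\chi_{v,v';\varkappa}(\Delta_{N})-\Delta_{N}$, $\zeta_{vv'}(\Delta_{N})-\Delta_{N}$, etc., and control of the remainder via uniform continuity of $F'$ together with $\Vert h\Vert=O(1/N)$. The only cosmetic difference is that the paper invokes the mean value theorem to write each increment as $F'_{Q_{k,N}}(h)$ at an intermediate point $Q_{k,N}$ and then uses $\Vert F'_{Q_{k,N}}-F'_{\Delta_{N}}\Vert\to 0$, whereas you phrase the same estimate through a modulus of continuity $\omega_{F'}$; the two formulations are interchangeable. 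Your treatment of the infinite-graph summation via quasi-locality is also in line with the paper, which simply appeals to ``the locality properties of $F$'' to reduce to a single node.
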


\begin{proof}
We have to compare the operators given by $\left(  \ref{031}-\ref{034}\right)
$ and $\left(  \ref{015}-\ref{019}\right)  $ term by term. For example,
compare the term $\left(  \ref{032}\right)  $
\[
\sum_{v}\sum_{k}\sum_{v^{\prime}}\sum_{\varkappa}\lambda\left(  \varkappa
,v,v^{\prime}\right)  \left[  F\left(  \Delta_{N}-\frac{1}{N}\delta\left(
q_{v,k}\right)  +\frac{1}{N}\delta\left(  q_{v,k}\oplus c^{v}\left(
\varkappa,0,v^{\prime}\right)  \right)  \right)  -F\left(  \Delta_{N}\right)
\right]  ,
\]
which corresponds to the arrival of a new customer of class $\varkappa$ with a
destination $v^{\prime};$ and the term $\left(  \ref{016}\right)  $
\[
\sum_{v}\sum_{v^{\prime}}\sum_{\varkappa}\lambda\left(  \varkappa,v,v^{\prime
}\right)  F_{\Delta_{N}}^{\prime}\left(  \chi_{v,v^{\prime};\varkappa}\left(
\Delta_{N}\right)  -\Delta_{N}\right)  ,
\]
where\textbf{ }$\chi_{v,v^{\prime};\varkappa}:M\left(  v\right)  \rightarrow
M\left(  v\right)  $ is the embedding, corresponding to the arrival to $v$ of
the external customer of class $\varkappa$ and destination $v^{\prime}.$ Due
to the locality properties of $F$ it is sufficient to establish the
convergence:%
\begin{align}
&  \sum_{k=1}^{N}\left[  F\left(  \Delta_{N}-\frac{1}{N}\delta\left(
q_{v,k}\right)  +\frac{1}{N}\delta\left(  q_{v,k}\oplus c^{v}\left(
\varkappa,0,v^{\prime}\right)  \right)  \right)  -F\left(  \Delta_{N}\right)
\right] \label{0021}\\
&  \rightarrow F_{\Delta_{N}}^{\prime}\left(  \chi_{v,v^{\prime};\varkappa
}\left(  \Delta_{N}\right)  -\Delta_{N}\right)
\end{align}
as $N\rightarrow\infty.$

The measure $\Delta_{N}$ is a collection of $N$ atoms, corresponding to queues
$q_{v,k},$ $k=1,...,N.$ In the first expression, we change just one of the $N$
atoms, adding a new customer $c^{v}\left(  \varkappa,0,v^{\prime}\right)  $ to
each of the queues $q_{v,k},$ and then take a sum of the corresponding
increments over $k.$ In the second expression, we change all atoms
simultaneously, obtaining the measure $\chi_{v,v^{\prime};\varkappa}\left(
\Delta_{N}\right)  ,$ plus instead of taking the increment $F\left(
\chi_{v,v^{\prime};\varkappa}\left(  \Delta_{N}\right)  \right)  -F\left(
\Delta_{N}\right)  $ we take the differential $F_{\Delta_{N}}^{\prime}$ of the
measure $\chi_{v,v^{\prime};\varkappa}\left(  \Delta_{N}\right)  -\Delta_{N}.$
To see the norm convergence in $\left(  \ref{0021}\right)  $ let us rewrite
the increments
\begin{align*}
&  F\left(  \Delta_{N}-\frac{1}{N}\delta\left(  q_{v,k}\right)  +\frac{1}%
{N}\delta\left(  q_{v,k}\oplus c^{v}\left(  \varkappa,0,v^{\prime}\right)
\right)  \right)  -F\left(  \Delta_{N}\right) \\
&  =F_{Q_{k,N}}^{\prime}\left(  -\frac{1}{N}\delta\left(  q_{v,k}\right)
+\frac{1}{N}\delta\left(  q_{v,k}\oplus c^{v}\left(  \varkappa,0,v^{\prime
}\right)  \right)  \right)  ,
\end{align*}
by the intermediate value theorem. Here the points $Q_{k,N}$ are some points
on the segments
\[
\left[  \Delta_{N},\Delta_{N}-\frac{1}{N}\delta\left(  q_{v,k}\right)
+\frac{1}{N}\delta\left(  q_{v,k}\oplus c^{v}\left(  \varkappa,0,v^{\prime
}\right)  \right)  \right]  .
\]
Note that the norms $\left\Vert Q_{k,N}-\Delta_{N}\right\Vert $ evidently go
to zero as $N\rightarrow\infty,$ and so $\left\Vert F_{Q_{k,N}}^{\prime
}-F_{\Delta_{N}}^{\prime}\right\Vert \rightarrow0$ as well. Thus%
\begin{align*}
&  \sum_{k=1}^{N}F_{Q_{k,N}}^{\prime}\left(  -\frac{1}{N}\delta\left(
q_{v,k}\right)  +\frac{1}{N}\delta\left(  q_{v,k}\oplus c^{v}\left(
\varkappa,0,v^{\prime}\right)  \right)  \right) \\
&  =F_{\Delta_{N}}^{\prime}\left[  \sum_{k=1}^{N}\left(  -\frac{1}{N}%
\delta\left(  q_{v,k}\right)  +\frac{1}{N}\delta\left(  q_{v,k}\oplus
c^{v}\left(  \varkappa,0,v^{\prime}\right)  \right)  \right)  \right] \\
&  +\sum_{k=1}^{N}\left(  F_{Q_{k,N}}^{\prime}-F_{\Delta_{N}}^{\prime}\right)
\left(  -\frac{1}{N}\delta\left(  q_{v,k}\right)  +\frac{1}{N}\delta\left(
q_{v,k}\oplus c^{v}\left(  \varkappa,0,v^{\prime}\right)  \right)  \right)  ,
\end{align*}
with the second term is uniformly small as $N\rightarrow\infty$. By
definition,
\[
\chi_{v,v^{\prime};\varkappa}\left(  \Delta_{N}\right)  -\Delta_{N}=\sum
_{k=1}^{N}\left(  -\frac{1}{N}\delta\left(  q_{v,k}\right)  +\frac{1}{N}%
\delta\left(  q_{v,k}\oplus c^{v}\left(  \varkappa,0,v^{\prime}\right)
\right)  \right)  ,
\]
and this proves the convergence needed. The other terms are compared in the
same manner.
\end{proof}

This completes the checking of the relation $\left(  \ref{012}\right)  $ of
the Convergence Theorem \ref{TK}.

\subsection{\label{FD} Frechet Differential Properties}

\begin{proposition}
\label{diff} The semigroup is uniformly differentiable in $t.$ In the
notations of Sect. $\ref{conv}$ (see $\left(  \ref{19}\right)  $) this means
that for all $t<T$
\[
\left\vert \left\vert \mu^{2}\left(  t\right)  -\mu^{1}\left(  t\right)
-\left[  \mathcal{D}\mu^{1}\left(  t\right)  \right]  \left(  h\right)
\right\vert \right\vert _{1}\leq\left\vert \left\vert h\right\vert \right\vert
_{1}o\left(  \left\vert \left\vert h\right\vert \right\vert _{1}\right)  ,
\]
where the function $o\left(  \left\vert \left\vert h\right\vert \right\vert
_{1}\right)  $ is small uniformly in $t\leq T$ and $\mu^{1},$ provided $T$ is
small enough.
\end{proposition}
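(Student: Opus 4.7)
The plan is to reduce the differentiability of $S_t$ to that of the fixed-point map $\psi_{\mu(0)}$ from the proof of the existence theorem, and then to use a coupling argument parallel to Parts \textbf{A}--\textbf{B} of that proof to get the uniform quadratic remainder. First, recall that $\mu^i(t)$ is uniquely determined by the fixed point $\bar{\lambda}^i$ of $\psi_{\mu^i(0)}$ on $L_L[0,T]$. The candidate Frechet differential $[\mathcal{D}\mu^1(t)](h)$ is obtained by formally differentiating the fixed-point equation $\bar{\lambda} = \psi_{\mu(0)}(\bar{\lambda})$ with respect to $\mu(0)$ at $\mu^1(0)$, yielding an affine linear equation for the linearized rates $\dot{\lambda}^{h}$. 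Because $\psi_{\mu(0)}$ is a contraction with factor $\sim T\mathcal{F}\mathbf{\beta}$, the linearized operator $I - D_\lambda \psi_{\mu^1(0)}$ is invertible by a Neumann series on $L_L[0,T]$, and the resulting $\dot{\lambda}^h$ depends linearly and boundedly on $h$ in the $\|\cdot\|_1$-norm.

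Next, I would couple the two processes started from $\mu^1(0)$ and $\mu^2(0) = \mu^1(0)+h$ by the coloring procedure used in Part \textbf{A}: synchronize all server exchanges and pair the external arrival Poisson fields, so that only ``colored'' events carry the discrepancy between $\mu^1$ and $\mu^2$. The signed measure $h$ decomposes as a difference of two small positive measures and is injected into the coupling as a bounded ``one-shot'' perturbation of total mass $\|h\|_1$. Exactly as in Part \textbf{A}, a single colored event influences a departure flow during $[0,T]$ with conditional probability $O(T\mathcal{F})$, and as in Part \textbf{B} the propagation across a server exchange costs an extra factor $T\mathbf{\beta}$. Summed over the number $k$ of exchanges, the total contribution to the rate discrepancy is of order $T\mathcal{F}\|h\|_1$ in the $\|\cdot\|_1$ sense, which by construction matches the linearized rates $\dot{\lambda}^h$ up to terms coming from \emph{two or more} colored events interacting.

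The Frechet remainder is then controlled by bounding this higher-order part. The probability that two colored customers (or a colored customer and a colored initial atom from $h$) interact within $[0,T]$ is at most $\mathrm{const}\cdot T^2 \mathcal{F}^2 \|h\|_1^2$ at a single node, with the usual $T^k\mathbf{\beta}^k$ factors for paths of server exchanges. Summing the geometric series gives
\begin{equation*}
\bigl\|\mu^2(t)-\mu^1(t)-[\mathcal{D}\mu^1(t)](h)\bigr\|_1 \leq C\,T\,\|h\|_1^2,
\end{equation*}
for $t\leq T$ with $T$ small, which is exactly $\|h\|_1\cdot o(\|h\|_1)$ as $\|h\|_1 \to 0$. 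The $\|\cdot\|_1$-norm, which tests against functions with bounded first derivative, is the correct norm in which to state this because the nonlinear rate functionals $\sigma_{tr}^\mu$ and $\sigma_{ex}^\mu$ are linear in $\mu$ and their pairing with $C_1$ test functions is controlled by $\|\cdot\|_1$.

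The main obstacle will be uniformity in $\mu^1$: a priori the constant $C$ above might degrade for pathological $\mu^1$, for instance when many queues become almost simultaneously empty and a single colored arrival has an amplified effect. Two ingredients handle this. First, the compactification from Section 4 makes the space $\mathcal{\bar{P}}$ of product probability measures compact, and the Feller property (Proposition \ref{Fe}) guarantees that the relevant rate functionals extend continuously, so a uniform bound on compacts suffices. Second, in the infinite graph case the exponentially decaying weight $\exp\{-2D(G)[\mathrm{dist}(v_0,v^\prime)+\mathrm{dist}(v_0,v)]\}$ used in the existence proof ensures that distant nodes contribute a summable amount to $\|\cdot\|_1$, so the constants stay finite. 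Combining these gives the required uniform Frechet differentiability with $o(\|h\|_1)$ independent of $t\leq T$ and $\mu^1$.
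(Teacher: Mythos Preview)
Your approach is genuinely different from the paper's, and while the overall strategy is sound, there is a soft spot you should be aware of.

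The paper does \emph{not} go through the fixed-point map $\psi_{\mu(0)}$ at all. Instead it works directly with the evolution equations $(\ref{001}$--$\ref{02})$: it writes down the linearized evolution equation for $h(t)=[\mathcal{D}\mu(t)](h)$ (a linear system in $h$ with coefficients depending on $\mu$), shows it has a unique solution by Peano plus a Gronwall bound $\|h(t)\|\le\|h(0)\|e^{Ct}$, then forms the remainder $\zeta(t)=[\mu+h](t)-\mu(t)-[\mathcal{D}\mu(t)](h)$ and writes its evolution equation. The only inhomogeneous (i.e.\ $\zeta$-free) terms in that equation are of the schematic form $h\cdot\sigma_{tr}^{h}$, which are manifestly $O(\|h\|^{2})$ because $\sigma_{tr}^{\mu}$ is linear in $\mu$. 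A second application of Gronwall then gives $\|\zeta(t)\|\le C'\|h\|^{2}$ uniformly for $t\le T$. This is a three-line PDE argument once the equations are on the page, and the uniformity in $\mu^{1}$ is automatic because the coefficients in the $\zeta$-equation are bounded by the universal constants $\mathcal{F}$, $\mathbf{\beta}$, $D(G)$.

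Your route via the implicit function theorem for the contraction $\psi_{\mu(0)}$ is more indirect, and the step that needs real work is the one you pass over quickly: the assertion that the single-colored-event contribution ``by construction matches the linearized rates $\dot{\lambda}^{h}$''. The coupling argument of Parts \textbf{A}--\textbf{B} in the existence proof was designed to compare two \emph{inflow rates} $\lambda^{1},\lambda^{2}$ at a fixed initial state; you are now coloring a perturbation $h$ of the \emph{initial measure}, which is a different object, and you must check that the first-order coupling effect really coincides with the formal linearization of $\psi_{\mu(0)}$ in $\mu(0)$. That identification is exactly where the bilinearity $\mu\cdot\sigma_{tr}^{\mu}$ is hiding, and without making it explicit your quadratic remainder bound is a plausibility argument rather than a proof. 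The paper's Gronwall approach sidesteps this by never needing to identify the coupling with the linearization --- it just subtracts the equations and reads off the $O(\|h\|^{2})$ source term.
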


\begin{proof}
To write the equation for the Frechet differential $h\left(  t\right)
=\left[  \mathcal{D}\mu\left(  t\right)  \right]  \left(  h\right)  $ of the
map $\mu\left(  t\right)  $ at the point $\mu=\mu\left(  0\right)  $ in the
direction $h$ we have to compare the evolving measures $\mu^{1}\left(
t\right)  $ and $\mu^{2}\left(  t\right)  ,$ which are solutions of the
equation $\left(  \ref{001}-\ref{02}\right)  $ with initial conditions $\mu$
and $\mu+h,$ and keep the terms linear in $h.$ In what follows we use the
notation $\sigma_{tr}^{\mu},$ where the superscript refers to the state in
which the rate\textbf{ }$\sigma_{tr}$ is computed, see $\left(  \ref{0012}%
\right)  .$

We have%
\begin{align}
&  \frac{d}{dt}h_{v}\left(  q_{v},t\right)  =\label{0128}\\
&  =-\frac{d}{dr_{i^{\ast}\left(  q_{v}\right)  }\left(  q_{v}\right)  }%
h_{v}\left(  q_{v},t\right)  +\nonumber
\end{align}
(derivative along the direction $r\left(  q_{v}\right)  $)%
\begin{align*}
&  +\delta\left(  0,\tau\left(  e\left(  q_{v}\right)  \right)  \right)
h_{v}\left(  q_{v}\ominus e\left(  q_{v}\right)  \right)  \left[  \sigma
_{tr}^{\mu}\left(  q_{v}\ominus e\left(  q_{v}\right)  ,q_{v}\right)
+\sigma_{e}\left(  q_{v}\ominus e\left(  q_{v}\right)  ,q_{v}\right)  \right]
\\
&  +\delta\left(  0,\tau\left(  e\left(  q_{v}\right)  \right)  \right)
\mu_{v}\left(  q_{v}\ominus e\left(  q_{v}\right)  \right)  \left[
\sigma_{tr}^{h}\left(  q_{v}\ominus e\left(  q_{v}\right)  ,q_{v}\right)
\right]  -
\end{align*}
$q_{v}$ is created from $q_{v}\setminus e\left(  q_{v}\right)  $ by the
arrival of $e\left(  q_{v}\right)  $ from $v^{\prime},$ $\delta\left(
0,\tau\left(  e\left(  q_{v}\right)  \right)  \right)  $ accounts for the fact
that if the last customer $e\left(  q_{v}\right)  $ was already served for
some time, than he cannot arrive from the outside, see $\left(  \ref{0012}%
\right)  $ and $\left(  \ref{42}\right)  ;$
\begin{equation}
-h_{v}\left(  q_{v},t\right)  \sum_{q_{v}^{\prime}}\left[  \sigma_{tr}^{\mu
}\left(  q_{v},q_{v}^{\prime}\right)  +\sigma_{e}\left(  q_{v},q_{v}^{\prime
}\right)  \right]  -\mu_{v}\left(  q_{v},t\right)  \sum_{q_{v}^{\prime}%
}\left[  \sigma_{tr}^{h}\left(  q_{v},q_{v}^{\prime}\right)  \right]
+\nonumber
\end{equation}
(the queue $q_{v}$ is changing due to customers arriving from the outside and
from other servers)%
\[
+\left[  \int_{q_{v}^{\prime}:q_{v}^{\prime}\ominus C\left(  q_{v}^{\prime
}\right)  =q_{v}}dh_{v}\left(  q_{v}^{\prime}\right)  \sigma_{f}\left(
q_{v}^{\prime},q_{v}^{\prime}\ominus C\left(  q_{v}^{\prime}\right)  \right)
\right]  -h_{v}\left(  q_{v}\right)  \sigma_{f}\left(  q_{v},q_{v}\ominus
C\left(  q_{v}\right)  \right)  +
\]
(here the first term describes the creation of the queue $q_{v}$ after a
customer was served in a queue $q_{v}^{\prime}$ (longer by one customer), such
that $q_{v}^{\prime}\ominus C\left(  q_{v}^{\prime}\right)  =q_{v},$ while the
second term describes the completion of service of a customer in $q_{v}$)
\begin{equation}
+\sum_{v^{\prime}\text{n.n.}v}\beta_{vv^{\prime}}\left[  h_{v}\left(
q_{v}\right)  -h_{v}\left(  q_{v}\right)  \right]  \label{0102}%
\end{equation}
(the $\beta$-s are the rates of exchange of the servers).

The existence of the solution to the (linear) equation $\left(  \ref{0128}%
-\ref{0102}\right)  $ follows by the Peano theorem, while the uniqueness of
the solution is implied by the estimate
\[
\left\Vert h\left(  t\right)  \right\Vert \leq\left\Vert h\left(  0\right)
\right\Vert e^{Ct},
\]
which follows from the Gronwall estimate.

Finally we want to estimate the remainder,%
\[
\zeta\left(  t\right)  =\left[  \mu+h\right]  \left(  t\right)  -\mu\left(
t\right)  -\left[  \mathcal{D}\mu\left(  t\right)  \right]  \left(  h\right)
.
\]
Here $\mu+h\equiv\mu\left(  0\right)  +h\left(  0\right)  \equiv\left[
\mu+h\right]  \left(  0\right)  $ is the small perturbation of $\mu,$ $\left[
\mu+h\right]  \left(  t\right)  $ is its evolution, and $\left[
\mathcal{D}\mu\left(  t\right)  \right]  \left(  h\right)  $ is the
application of the Frechet differential of the map $\nu\left(  0\right)
\rightsquigarrow\nu\left(  t\right)  ,$ computed at the point $\mu$ and
applied to the increment $h.$ Note that $\zeta\left(  0\right)  =0$ and it
satisfies the equation%
\begin{align*}
&  \frac{d}{dt}\zeta_{v}\left(  q_{v},t\right) \\
&  =-\frac{d}{dr_{i^{\ast}\left(  q_{v}\right)  }\left(  q_{v}\right)  }%
\zeta_{v}\left(  q_{v},t\right)  +\\
&  +\delta\left(  0,\tau\left(  e\left(  q_{v}\right)  \right)  \right)
\zeta_{v}\left(  q_{v}\ominus e\left(  q_{v}\right)  \right)  \sigma
_{e}\left(  q_{v}\ominus e\left(  q_{v}\right)  ,q_{v}\right)  -\\
&  +\delta\left(  0,\tau\left(  e\left(  q_{v}\right)  \right)  \right)
\zeta_{v}\left(  q_{v}\ominus e\left(  q_{v}\right)  \right)  \sigma
_{tr}^{\left[  \mu+h\right]  }\left(  q_{v}\ominus e\left(  q_{v}\right)
,q_{v}\right) \\
&  +\delta\left(  0,\tau\left(  e\left(  q_{v}\right)  \right)  \right)
\left[  \mu+h\right]  _{v}\left(  q_{v}\ominus e\left(  q_{v}\right)  \right)
\sigma_{tr}^{\zeta}\left(  q_{v}\ominus e\left(  q_{v}\right)  ,q_{v}\right)
\\
&  +\delta\left(  0,\tau\left(  e\left(  q_{v}\right)  \right)  \right)
h_{v}\left(  q_{v}\ominus e\left(  q_{v}\right)  \right)  \sigma_{tr}%
^{h}\left(  q_{v}\ominus e\left(  q_{v}\right)  ,q_{v}\right)  -\\
&  -\zeta_{v}\left(  q_{v},t\right)  \sum_{q_{v}^{\prime}}\left[  \sigma
_{tr}^{\left[  \mu+h\right]  }\left(  q_{v},q_{v}^{\prime}\right)  +\sigma
_{e}\left(  q_{v},q_{v}^{\prime}\right)  \right]  -\\
&  -h_{v}\left(  q_{v},t\right)  \sum_{q_{v}^{\prime}}\sigma_{tr}^{h}\left(
q_{v},q_{v}^{\prime}\right)  -\left[  \mu+h\right]  _{v}\left(  q_{v}%
,t\right)  \sum_{q_{v}^{\prime}}\sigma_{tr}^{\zeta}\left(  q_{v},q_{v}%
^{\prime}\right)  +\\
&  +\left[  \int_{q_{v}^{\prime}:q_{v}^{\prime}\ominus C\left(  q_{v}^{\prime
}\right)  =q_{v}}d\zeta_{v}\left(  q_{v}^{\prime}\right)  \sigma_{f}\left(
q_{v}^{\prime},q_{v}^{\prime}\ominus C\left(  q_{v}^{\prime}\right)  \right)
\right]  -\zeta_{v}\left(  q_{v}\right)  \sigma_{f}\left(  q_{v},q_{v}\ominus
C\left(  q_{v}\right)  \right)  +\\
&  +\sum_{v^{\prime}\text{n.n.}v}\beta_{vv^{\prime}}\left[  \zeta_{v^{\prime}%
}\left(  q_{v}\right)  -\zeta_{v}\left(  q_{v}\right)  \right]  .
\end{align*}
Note that the initial condition for the last equation is $\left[
\zeta\right]  _{v}\left(  q,t\right)  =0.$ The terms in the r.h.s. which do
not contain $\zeta$ are%
\begin{align*}
&  \delta\left(  0,\tau\left(  e\left(  q_{v}\right)  \right)  \right)
h_{v}\left(  q_{v}\ominus e\left(  q_{v}\right)  \right)  \sigma_{tr}%
^{h}\left(  q_{v}\ominus e\left(  q_{v}\right)  ,q_{v}\right) \\
&  -h_{v}\left(  q_{v},t\right)  \sum_{q_{v}^{\prime}}\sigma_{tr}^{h}\left(
q_{v},q_{v}^{\prime}\right)  ,
\end{align*}
which are of the order of $\left\vert \left\vert h\right\vert \right\vert
^{2}.$ Therefore by Gronwall inequality the same bound holds uniformly for the
function $\left[  \zeta\right]  _{v}\left(  q,t\right)  ,$ provided $t\leq T$
with $T$ small enough.
\end{proof}

\begin{proposition}
The set of uniformly differentiable functions is a core of the generator of
our semigroup.
\end{proposition}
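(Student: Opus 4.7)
The statement has two ingredients: density of the subspace of uniformly (Frechet) differentiable functions in the ambient space $X = \mathcal{C}_u(\mathcal{\bar{P}})$ (equipped with the norm $\|\cdot\|_1$), and invariance of this subspace under the semigroup $\exp\{t\Omega\}$. My plan is to address each in turn and then invoke the core definition recalled in $(\ref{013})$.

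For density, I would exploit the fact that $\mathcal{\bar{P}}$ is compact in the topology coming from $\prod_v \mathcal{C}_1^*(M(v))$. Consider the algebra $\mathcal{A}$ generated by the cylinder functionals $\mu \mapsto \langle \mu_v, \phi \rangle$ with $\phi \in \mathcal{C}_1(M(v))$ and $v$ ranging over finite subsets of $G$. By construction of $\|\cdot\|_1$, such functionals separate points of $\mathcal{\bar{P}}$, and a standard Stone--Weierstrass argument adapted to this compactification shows that $\mathcal{A}$ is uniformly dense in $\mathcal{C}_u(\mathcal{\bar{P}})$. Every element of $\mathcal{A}$ is a polynomial in the $\mu_v$, whose Frechet differential is explicit and uniformly continuous, so $\mathcal{A}$ lies in our candidate core.

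For invariance, the central observation is that $[\exp\{t\Omega\}f](\mu) = f(S_t\mu)$, so by the chain rule
$$[\mathcal{D}(\exp\{t\Omega\}f)(\mu)](h) = [f'(S_t\mu)]\bigl([\mathcal{D}S_t\mu](h)\bigr).$$
Proposition \ref{diff} above established that $\mu \mapsto S_t\mu$ is uniformly Frechet differentiable for $t \leq T$ small, with differential $\mathcal{D}S_t\mu$ bounded in operator norm (cf. estimate $(\ref{19})$) and continuous in $\mu$ (by the linearized evolution equation $(\ref{0128})$--$(\ref{0102})$ and the Gronwall bound appearing in its proof). Composing a uniformly continuous, uniformly bounded derivative $f'$ with $\mathcal{D}S_t\mu$ then yields a uniformly continuous Frechet derivative of $\exp\{t\Omega\}f$ on $\mathcal{\bar{P}}$. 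The result for arbitrary $t \geq 0$ follows by iterating over subintervals of length at most $T$ and using the semigroup property $\exp\{(t+s)\Omega\} = \exp\{t\Omega\}\exp\{s\Omega\}$.

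The main obstacle is the density step in the infinite-graph case: the cylinder algebra $\mathcal{A}$ must be chosen compatibly with the quasi-locality assumption on $X$ (so that approximants enjoy the exponential decay inherited from the weighting in the norm on $L_L[0,T]$-type functionals introduced in Section 3), and Stone--Weierstrass must be applied to the compact metric space $\mathcal{\bar{P}}$ with this metric. Once density and semigroup invariance are both in hand, the core property is immediate from $(\ref{013})$, completing the input needed for Theorem \ref{TK} and hence finishing the proof of Part 2 of Theorem \ref{main result}.
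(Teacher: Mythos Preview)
Your proposal is correct and follows essentially the same route as the paper: the paper's proof is the single sentence ``Follows from the previous Proposition, by the chain rule, and the Stone-Weierstrass Theorem,'' and you have simply unpacked those three ingredients (Proposition~\ref{diff} for invariance via the chain rule, Stone--Weierstrass on the compact $\mathcal{\bar{P}}$ for density). Your additional care about the cylinder algebra and the infinite-graph quasilocality is reasonable elaboration but not a different method.
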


\begin{proof}
Follows from the previous Proposition, by the chain rule, and the
Stone-Weierstrass Theorem.
\end{proof}

This implies that the second condition $\left(  \ref{013}\right)  $ of the
Theorem \ref{TK} holds as well, so in our case it is indeed applicable.

\section{Conclusion}

In this paper we have established the convergence of the mean-field version of
the spatially extended network with jumping servers to a Non-Linear Markov
Process. The configuration of the $N$-component mean-field network is
described by the (atomic) measure $\mu_{N}\left(  t\right)  ,$ which randomly
evolves in time. We have shown that in the limit $N\rightarrow\infty$ the
measures $\mu_{N}\left(  t\right)  \rightarrow\mu\left(  t\right)  ,$ where
the evolution $\mu\left(  t\right)  $ is already non-random. In a sense, this
result can be viewed as a functional law of large numbers.

Our results can easily be generalized to the situation when instead of the
underlying (infinite) graph $G$ we take a sequence of finite graphs $H_{n},$
such that $H_{n}\rightarrow G,$ consider the $N$-fold mean-field type networks
$H_{n,N},$ and take the limit as $n,N\rightarrow\infty.$

\end{document}